\newcommand{\ignore}[1]{}
\def\th@plain{%
  \thm@notefont{}% same as heading font
  \itshape % body font
}
\def\th@definition{%
  \thm@notefont{}% same as heading font
  \normalfont % body font
}
\newtheorem{theorem}{Theorem}[section]
\newtheorem{lemma}[theorem]{Lemma}
\newtheorem{corollary}[theorem]{Corollary}
\newtheorem{conjecture}[theorem]{Conjecture}
\newtheorem*{rep@theorem}{\rep@title}
\newcommand{\newreptheorem}[2]{%
\newenvironment{rep#1}[1]{%
 \def\rep@title{#2 \ref{##1}}%
 \begin{rep@theorem}}%
 {\end{rep@theorem}}}
\newcommand{\RR}{\ensuremath{\mathbb R}}
\newcommand{\HH}{\ensuremath{\mathbb H}}
\newcommand{\I}{\mathcal I}
\newcommand{\pts}{\mathcal P}
\newcommand{\lines}{\mathcal L}
\renewcommand{\epsilon}{\varepsilon}
\def \eps {\varepsilon}
\begin{document}
\pagenumbering{arabic}

\title{An improved sum-product bound for quaternions}

\author{
Abdul Basit\thanks{
Department of Mathematics, 255 Hurley Hall, University of Notre Dame, Notre Dame, IN 46556;
{\sl abasit@nd.edu}}
\and
Ben Lund\thanks{
Department of Mathematics, Fine Hall, Princeton University, Princeton NJ 08544; {\sl lund.ben@gmail.com}. 
}
}

%\date{}

\maketitle

\begin{abstract}
We show that there exists an absolute constant $c > 0$, such that, for any finite set $A$ of quaternions, \[ \max\{|A+A|, |AA| \} \gtrsim |A|^{4/3 + c}. \]
This generalizes a sum-product bound for real numbers proved by Konyagin and Shkredov.
\end{abstract}

%%%%%%%%%%%%%%%%%%%%%%%%%%%%%%%%%%%%%%%%%%%%%%%%%%%% INTRODUCTION

\section{Introduction}

 By $X \gg Y$ or $Y \ll X$, we mean that $X \geq cY$, for some absolute constant $c > 0$. The expression $X \approx Y$ means that both $X \gg Y$ and $X \ll Y$ hold. The expression $X \gtrsim Y$ or $Y \lesssim X$ means that $X \gg Y/(\log X)^c$, for some absolute constant $c > 0$. When dependence on a certain parameter needs to be emphasized, we write the parameter in the subscript, e.g., $Y \ll_\epsilon X$ means that the hidden constant depends on $\epsilon$. All logarithms have base 2.

Given finite subsets $A, B$ of a ring, the {\em sum set} and {\em product set} are defined respectively as \[ A + B := \{a + b: a \in A, b \in B \},\] and \[AB := \{ab:  a \in A, b \in B \}. \]

Erd\H{o}s and Szemer\'edi~\cite{es83} conjectured that, for finite sets of integers, one of these must be nearly as large as possible.
\begin{conjecture}[Erd\H{o}s, Szemer\'edi]\label{conj:sumProduct}
	Let $A$ be a finite set of integers. Then, for any $\delta < 1$,
	\begin{equation}\label{eq:sumProduct}
	|A+A| + |AA| \gg_\delta |A|^{1+\delta}. \end{equation}
\end{conjecture}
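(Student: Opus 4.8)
The statement is the full Erd\H{o}s--Szemer\'edi conjecture, which is open; what follows is the strategy underlying the strongest known partial results, carried to its natural conclusion. The plan is to argue by self-improvement: assume for contradiction that $|A+A| + |AA| \ll |A|^{1+\delta}$ for some $\delta < 1$, and deduce $|A+A| + |AA| \gg |A|^{1+\delta+c_0(\delta)}$ for an explicit gain $c_0(\delta) > 0$, so that the supremum of attainable exponents must be a fixed point of $\delta \mapsto \delta + c_0(\delta)$, hence equal to $1$. The engine is a pair of energies: the multiplicative energy $E^\times(A) = |\{(a,b,c,d) \in A^4 : ab = cd\}|$ and the additive energy $E^+(A) = |\{(a,b,c,d) \in A^4 : a+b = c+d\}|$. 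Since $E^\times(A) \geq |A|^4/|AA|$ and $E^+(A) \geq |A|^4/|A+A|$ by Cauchy--Schwarz, the contradiction hypothesis forces \emph{both} energies to be nearly extremal, so $A$ would have to look simultaneously like an arithmetic progression and like a geometric progression.

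First I would set up the Elekes--Szemer\'edi reduction: $|A+A|^2|AA|$ bounds the number of incidences between the point set $(A+A)\times(A+A)$ and the line family $\{\,y = ax + ab : a \in AA,\ b \in A\,\}$ (or its division variant using $A/A$), and Szemer\'edi--Trotter then gives the base case $|A+A| + |AA| \gg |A|^{5/4}$, i.e. $\delta \geq 1/4$. Next I would upgrade the crude incidence count to the finer tools of Konyagin--Shkredov and Rudnev--Stevens: third-moment ($\ell^3$) incidence estimates, asymmetric point--line bounds, and bounds on the number of collinear triples in structured point sets. Feeding the contradiction hypothesis into these, one controls the third energy $E_3(A)$ or the additive energy $E^+(AA)$ of the product set, and converting back to cardinalities closes the loop and yields the gain $c_0(\delta)$. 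Iterating from $\delta_0 = 1/4$ produces an increasing sequence $\delta_0 < \delta_1 < \delta_2 < \dotsb$, and the conjecture is precisely the assertion that $\sup_n \delta_n = 1$.

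The hard part --- and the reason this remains a conjecture --- is that every known realization of the step $\delta \mapsto \delta + c_0(\delta)$ has $c_0(\delta) \to 0$ as $\delta \to 1$, so the iteration converges to a limit $\delta_\infty < 1$ (currently $\delta_\infty = 1/3 + c$ for a tiny absolute $c$, which is exactly what the abstract's quaternion bound ultimately rests on). The obstruction is structural: the incidence and energy inequalities in play are tight against the arithmetic progression on the additive side and the geometric progression on the multiplicative side, and a purely incidence-geometric argument --- which sees only the order structure of $\RR$ via Szemer\'edi--Trotter --- cannot simultaneously rule out both extremal configurations. Reaching exponent $1$ would require a genuinely new ingredient with no extremal example, such as a sum-product-tailored incidence inequality or an entropy/algebraic input forbidding near-simultaneous additive and multiplicative structure; no such ingredient is presently known. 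Thus the proposal above establishes the conjecture only modulo that missing estimate, while unconditionally the same machinery delivers the exponent $4/3 + c$ stated in the abstract, together with its quaternionic generalization that is the actual theorem of this paper.
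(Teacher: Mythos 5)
You have correctly read the situation: the statement labeled \emph{Conjecture} is the Erd\H{o}s--Szemer\'edi conjecture, which the paper does not prove (and which remains open). The paper invokes it only as motivation; its actual theorem is the partial bound $|A+A|+|AA|\gg|A|^{4/3+c}$ extended to quaternions. Your proposal candidly acknowledges this, and your sketch of the self-improving energy/incidence framework, the Elekes base case $\delta\geq 1/4$, the Konyagin--Shkredov third-moment refinements, and the structural obstruction (the known gain $c_0(\delta)$ vanishes before $\delta$ reaches $1$, stalling the iteration near $1/3$) is an accurate summary of the state of the art and is consistent with the history the paper cites. Since there is no proof in the paper to compare against, the only thing to flag is a minor framing point: the literature does not actually run an iteration $\delta_n\mapsto\delta_{n+1}$; each improvement (Solymosi, Konyagin--Shkredov, Shakan, etc.) is a one-shot argument with a different mechanism, so ``self-improvement to a fixed point'' is a heuristic you've layered on rather than how the bounds are genuinely obtained. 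That does not affect your (correct) conclusion that the conjecture is out of reach by current methods and that what the machinery does deliver is exponent $4/3+c$, which is precisely the content of Theorem~\ref{th:main}.
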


In their initial work on the problem, Erd\H{o}s and Szemer\'edi showed that Conjecture \ref{conj:sumProduct} holds for some fixed $\delta>0$.
Subsequent works by Nathanson~\cite{nathanson97}, Ford~\cite{ford98}, Elekes~\cite{elekes97}, and Solymosi~\cite{solymosi08} gave increasing values for $\delta$.
Solymosi used a beautiful and simple geometric argument to show that Conjecture~\ref{conj:sumProduct} holds with $\delta \leq 1/3 - \eps$ for any $\eps > 0$ whenever $A$ is a set of real numbers.
This bound stood until 2015, when Konyagin and Shkredov~\cite{ks15} combined Solymosi's geometric insight with Shkredov's work in additive combinatorics to get a slight improvement.
Further incremental progress was made by Konyagin and Shkredov~\cite{ks16}, by Rudnev, Shkredov and Stevens~\cite{rss16}, and by Shakan~\cite{shakan18}. Shakan's result gives the current best bound for $\delta$, showing that Conjecture~\ref{conj:sumProduct} holds with $\delta \leq 1/3 + 5/5277$, whenever $A$ is a finite set of real numbers.

The conjecture has also been studied for other fields and rings.  Konyagin and Rudnev~\cite{kr13} generalized Solymosi's geometric argument to finite sets of complex numbers, showing that Conjecture~\ref{conj:sumProduct} holds with $\delta \leq 1/3 - \eps$ for any $\eps > 0$ whenever $A \subset \mathbb{C}$. For quaternions, Chang~\cite{chang05} proved the bound $\delta \leq 1/54$. This was improved upon by Solymosi and Wong~\cite{sw17}, who showed that, for finite sets of the quaternions, Conjecture~\ref{conj:sumProduct} holds with $\delta \leq 1/3 - \epsilon$, for any $\epsilon > 0$. 
For more detail on the sum-product conjecture and its variants, see the  recent survey of Granville and Solymosi~\cite{gs16}.

Our contribution is to generalize Konyagin and Shkredov's proof to quaternions, hence passing the $\delta=1/3$ barrier.
\begin{theorem}\label{th:main}
	There exists an absolute constant $c > 0$ such that, for any finite set $A$ of quaternions,
	\begin{equation}\label{eq:main}
	|A+A| + |AA| \gg |A|^{4/3 + c}.
	\end{equation}
\end{theorem}
We make no attempt to prove Theorem~\ref{th:main} for the largest possible value of $c$, instead preferring to keep the exposition relatively simple and self contained.

\paragraph{Overview}
Our proof follows the general outline of Konyagin-Shkredov in \cite{ks15} and \cite{ks16}. Since our aim is to keep this paper self contained rather than obtain the best value for $c$, at various points we make weaker estimates than the ones used in these papers.

We split the problem into two cases, depending on the additive energy of $A$ (see Section~\ref{sec:prelims} for the definitions).
In the case that this additive energy is small, we prove an appropriate generalization of Solymosi's argument, in the spirit of Konyagin-Rudnev~\cite{kr13} and Solymosi-Wong~\cite{sw17} (see Section~\ref{sec:large-k}).
In the case that this additive energy is large, we adapt the arguments of Konyagin and Shkredov~\cite{ks16} to work for quaternions (see Section~\ref{sec:small-k}).
This requires us to replace an application of the Szemer\'edi-Trotter theorem by a generalization proved by Solymosi and Tao~\cite{st12}, and to adapt the definitions and arguments of Konyagin and Shrkedov to work when multiplication is not commutative. This is done in Section~\ref{sec:energies}.

%%%%%%%%%%%%%%%%%%%%%%%%%%%%%%%%%%%%%%%%%%%%%%%%%%%%%
%% Section: Preliminaries
%%%%%%%%%%%%%%%%%%%%%%%%%%%%%%%%%%%%%%%%%%%%%%%%%%%%%

\section{Preliminaries}
\label{sec:prelims}

Given finite subsets $A, B$ of the quaternions, $\HH$, the {\em sum set} and {\em product set} are defined respectively as \[ A + B := \{a + b: a \in A, b \in B \},\] and \[AB := \{ab:  a \in A, b \in B \}. \]
We define the {\em negation} of $A$ to be  \[ -A := \{-a : a \in A \},  \] 
and the {\em inverse} of $A$ to be \[ A^{-1} := \{a^{-1} : a \in A,\ a \neq 0 \}. \]
The {\em difference set} is defined to be $A - B$, and the {\em ratio set} is defined as $A/B := AB^{-1} \cap B^{-1}A$. 

Denote by $\delta_{A + B}(x)$ and by $\delta_{AB}(x)$ the number of representations of $x$ of the form $a + b$ and $ab$ with $a \in A$, $b \in B$, respectively.  Let
\[  E^+_k (A, B) := \sum_{x} \delta_{A-B}(x)^k, \quad \mbox{and } \quad E^*_k (A, B) := \sum_{x} \delta_{AB}(x)^k. \]
The {\em additive energy of $A$ and $B$}, denoted by $E^+(A, B)$, is defined to be:
\begin{align*}
E^+(A, B) & := E^+_2(A, B) = \sum_{x} \delta_{A - B}(x)^2 = |\{(a, b, c, d) \in (A \times B)^2: a - b = c - d\}| \\
& =  |\{(a, b, c, d) \in (A \times B)^2: a - c = b - d\}| = \sum_{x} \delta_{A - A}(x)\delta_{B-B}(x)\\
& = |\{(a, b, c, d) \in (A \times B)^2: a + d = c + b\}| = \sum_{x} \delta_{A + B}(x)^2.
\end{align*}
Similarly, the {\em multiplicative energy of $A$ and $B$}, denoted by $E^*(A, B)$, is defined to be 
\begin{align*}
E^*(A, B) := E^*_2(A, B)
& = \sum_{x} \delta_{AB}(x)^2 = |\{(a, b, c, d) \in (A \times B)^2: ab = cd\}|.\\
& = |\{(a, b, c, d) \in (A \times B)^2: c^{-1}a = db^{-1} \}| = \sum_{x} \delta_{A^{-1}A}(x) \delta_{BB^{-1}}(x).
\end{align*}
When $A = B$, we write $E^+_k(A)$, $E^+(A)$, $E^*_k(A)$, and $E^*(A)$ to simplify notation. We note that, since multiplication of quaternions is not commutative, some care is necessary when dealing with product sets, ratio sets, and the multiplicative energy.

The Cauchy-Schwarz inequality implies the following lower bounds on the additive and multiplicative energies:
\begin{equation}
\label{eq:additivecs}
E^+(A, B) \geq \frac{|A|^2|B|^2}{|A + B|}, \quad \mbox{ and }\quad E^+(A, B) \geq \frac{|A|^2|B|^2}{|A - B|}.
\end{equation}
\begin{equation}
\label{eq:multiplicativecs}
E^*(A, B) \geq \frac{|A|^2|B|^2}{|AB|}.
\end{equation}

%%%%%%%%%%%%%%%%%%%%%%%%%%%%%%%%%%%%%%%%%%%%%%%%%%%%%
%% Section: Incidences
%%%%%%%%%%%%%%%%%%%%%%%%%%%%%%%%%%%%%%%%%%%%%%%%%%%%%

\subsection{Results from incidence geometry}

In this section, we set up some notation and results about {\em quaternionic lines}. In Section~\ref{sec:energies}, these results will be used to connect various energies with the sizes of the sum and product sets.

Define a {\em left line} to be any set in $\HH^2$ of the form $\{(a,b)+t(c,d) : t \in \HH\}$, for some $a,b,c,d \in \HH$ with $(c, d) \neq (0, 0)$. Similarly, define {\em right}, {\em lr-mixed} and {\em rl-mixed lines} to be sets of the form $\{(a,b)+(c,d)t : t \in \HH\}$, $\{(a,b)+(tc,dt) : t \in \HH\}$, and $\{(a,b)+(ct,td) : t \in \HH\}$ respectively. We say two lines are of the {\em same type} if both are sets of the same form. It is straightforward to check that any two distinct quaternionic lines {\em of the same type} intersect in at most one point. Throughout this writeup, a set of quaternionic lines is restricted to have all lines of the same type.

Given a point set $\pts$ and a set of quaternionic lines $\lines$, both in $\HH^2$, we say that an incidence is a pair $(p,l) \in \pts \times \lines$ with the point $p$ lying on the line $l$. We denote by $\I(\pts, \lines)$ the set of incidences in $\pts \times \lines$. The following theorem is a special case of a result of Solymosi and Tao~\cite{st12}:

%%%%%%%%%%%%%%%%%%%%%%%%%%%%%%%%%%%%%%%%%%%%%%%%%%%%%
%%%%%%%%%%%%%%%%%%%%%%%%%%%%%%%%%%%%%%%%%%%%%%%%%%%%%

\begin{theorem}
\label{th:solymosi-tao}
Let $\epsilon > 0$. Then there exists a constant $A = A_\epsilon > 0$ such that \[ |\I(\pts, \lines)| \leq A|\pts|^{2/3 + \epsilon}|\lines|^{2/3} + \frac{3}{2}|\pts| + \frac{3}{2}|\lines| \] whenever $\pts$ is a finite set of points in $\HH^2$, and $\lines$ is a finite set of quaternionic lines in $\HH^2$.
\end{theorem}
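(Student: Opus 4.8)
The plan is to reduce Theorem~\ref{th:solymosi-tao} to a point--flat incidence problem over $\RR$ and then quote the incidence theorem of Solymosi and Tao \cite{st12} essentially verbatim. First I would fix the standard identification of $\HH$ with $\RR^4$ as real vector spaces, which induces an identification $\HH^2 \cong \RR^8$, and check that under it a set of quaternionic lines of one fixed type (as is assumed throughout) becomes a set of $4$-dimensional affine subspaces of $\RR^8$. For a left line $\{(a,b)+t(c,d) : t \in \HH\}$ this is immediate: for each fixed $q \in \HH$ the map $t \mapsto tq$ is $\RR$-linear on $\HH \cong \RR^4$ (it is one argument of the $\RR$-bilinear multiplication on the real algebra $\HH$), so $t \mapsto t(c,d) = (tc,td)$ is an $\RR$-linear map $\RR^4 \to \RR^8$; since $(c,d) \neq (0,0)$ and $\HH$ has no zero divisors this map is injective, and hence the left line is the translate by $(a,b)$ of a genuine $4$-dimensional subspace. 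Right lines are handled identically using $t \mapsto qt$, and the two mixed types using one of $t \mapsto qt$ and $t \mapsto tq$ in each coordinate. Thus each of the four types of quaternionic line consists of $4$-flats in $\RR^8$.

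Next I would record the two facts that put us inside the hypotheses of Solymosi--Tao. By construction these flats have degree $1$ and lie in the fixed ambient space $\RR^8$; and, as noted in the paragraph preceding the theorem, any two distinct lines of the same type meet in at most one point. The latter says precisely that the bipartite incidence graph between $\pts$ and $\lines$ is $K_{2,2}$-free (equivalently, any two distinct points of $\HH^2$ lie on at most one line of $\lines$), which is the pseudoline-type condition under which the argument of Solymosi and Tao yields a Szemer\'edi--Trotter-type bound for points and bounded-degree $k$-dimensional varieties in $\RR^d$. Applying their theorem with $k = 4$, $d = 8$, and absorbing the dependence on $k$, $d$ and the degree into the constant $A = A_\epsilon$ gives
\[ |\I(\pts,\lines)| \leq A\,|\pts|^{2/3+\epsilon}|\lines|^{2/3} + \tfrac{3}{2}|\pts| + \tfrac{3}{2}|\lines|, \]
where the two linear terms, their explicit coefficients, and the handling of the degenerate ranges (very few, or very many, lines) are all inherited directly from the Solymosi--Tao statement; no case analysis beyond what is internal to their proof is required.

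Since Theorem~\ref{th:solymosi-tao} is literally a specialization of a published theorem, the only point that genuinely needs to be checked is that the family of $4$-flats arising from a single line type does satisfy the hypotheses of Solymosi--Tao --- above all the ``at most one common point'' condition, which is exactly what prevents two of the flats from sharing a positive-dimensional subspace and thereby spoiling the $K_{2,2}$-freeness. This is the step I expect to be the main (indeed, essentially the only) obstacle; it is discharged by the short check indicated in the text preceding the theorem, after which everything reduces to bookkeeping inside the cited incidence theorem.
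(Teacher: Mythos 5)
Your proposal is correct and matches the paper's treatment: the paper gives no independent proof, citing the statement as a special case of Solymosi--Tao, and your reduction (identifying $\HH^2$ with $\RR^8$, observing that each fixed type of quaternionic line gives a family of $4$-dimensional affine flats any two of which meet in at most one point, then invoking their incidence theorem with $k=4$, $d=8$) is exactly the implicit verification behind that citation. No further comment is needed.
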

As a consequence, we have the following upper bound on the number of {\em $k$-rich lines} determined by a finite set in $\HH^2$:
\begin{corollary}
\label{co:solymosi-tao}
Let $\epsilon > 0$. Then for any set $\pts$ of points in $\HH^2$ and, for any integer $k \geq 2$, the number of quaternionic lines containing at least $k$ points of $\pts$, denoted by $n_k$, satisfies  \[ n_k \ll_\epsilon \frac{|\pts|^{2+\epsilon}}{k^3} + \frac{|\pts|}{k} . \]
\end{corollary}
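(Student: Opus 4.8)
The plan is to apply Theorem~\ref{th:solymosi-tao} to the set $\lines$ consisting of all quaternionic lines (of one fixed type) that are incident to at least $k$ points of $\pts$, so that $|\lines| = n_k$, and to combine this with the trivial lower bound $|\I(\pts,\lines)| \geq k\, n_k$, which holds because each line of $\lines$ contributes at least $k$ incidences. (The four types of lines are handled separately and summed at the end, which only affects the implied constant.) If $n_k = 0$ there is nothing to prove, so assume $n_k \geq 1$.

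Comparing the two estimates gives
\[ k\, n_k \;\leq\; A_\epsilon\, |\pts|^{2/3+\epsilon}\, n_k^{2/3} + \tfrac32 |\pts| + \tfrac32 n_k . \]
Since $k \geq 2$, we have $k n_k - \tfrac32 n_k \geq \tfrac{k}{4} n_k$, hence
\[ \tfrac{k}{4}\, n_k \;\leq\; A_\epsilon\, |\pts|^{2/3+\epsilon}\, n_k^{2/3} + \tfrac32 |\pts| . \]
Now split according to which term on the right-hand side is larger. If the first term dominates, then $\tfrac{k}{8} n_k \leq A_\epsilon |\pts|^{2/3+\epsilon} n_k^{2/3}$; dividing by $n_k^{2/3}$ and cubing gives $n_k \ll_\epsilon |\pts|^{2+3\epsilon}/k^3$. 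If the second term dominates, then $\tfrac{k}{8} n_k \leq \tfrac32 |\pts|$, i.e. $n_k \ll |\pts|/k$. In either case $n_k \ll_\epsilon |\pts|^{2+3\epsilon}/k^3 + |\pts|/k$, and since $\epsilon > 0$ was arbitrary we may replace $3\epsilon$ by $\epsilon$ to obtain the stated bound.

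I do not expect any genuine obstacle here: this is the standard dyadic/double-counting passage from an incidence bound to a bound on rich lines. The only points needing a little care are the bookkeeping for the four line types, the use of $k \geq 2$ to absorb the linear error term $\tfrac32 n_k$ from Theorem~\ref{th:solymosi-tao}, and the harmless relabelling of $\epsilon$.
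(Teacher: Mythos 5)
Your proof is correct and is exactly the standard derivation the paper has in mind (the corollary is stated without proof as an immediate consequence of Theorem~\ref{th:solymosi-tao}): apply the incidence bound to the $k$-rich lines of each fixed type, use $|\I(\pts,\lines)|\geq k\,n_k$, absorb the $\tfrac32 n_k$ term via $k\geq 2$, and rescale $\epsilon$. No gaps; the bookkeeping over the four line types and the finiteness of the family of $k$-rich lines (two points lie on at most one line of each type) are handled or immediate.
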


%%%%%%%%%%%%%%%%%%%%%%%%%%%%%%%%%%%%%%%%%%%%%%%%%%%%%
%%%%%%%%%%%%%%%%%%%%%%%%%%%%%%%%%%%%%%%%%%%%%%%%%%%%%

Next, we use Corollary~\ref{co:solymosi-tao} to obtain the following upper bound on the number of collinear triples:
\begin{lemma}
\label{le:collinear-triples}
Let $\epsilon > 0$. For a finite set $A$ of quaternions, let $T$ be the number of collinear triples in $A \times A \subset \HH^2$ (where the points in a triple are not necessarily distinct). Then \[ T \ll_\epsilon |A|^{4+\epsilon}. \]
\end{lemma}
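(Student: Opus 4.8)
The plan is to dyadically decompose the collinear triples according to the richness of the line containing them, then apply Corollary~\ref{co:solymosi-tao} to control the number of lines at each scale. Write $P = A \times A \subset \HH^2$, so $|P| = |A|^2$. Every collinear triple of $P$ lies on a quaternionic line of some fixed type; since two distinct lines of the same type meet in at most one point, a line containing exactly $m$ points of $P$ contributes at most $\binom{m}{3} \ll m^3$ collinear triples, and no triple is counted for more than one line (of a given type). Summing over the $O(1)$ types, it suffices to bound, for a single type, the quantity $\sum_{\ell} r(\ell)^3$, where $r(\ell)$ is the number of points of $P$ on $\ell$ and the sum is over lines with $r(\ell) \geq 3$.

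The next step is the dyadic pigeonhole. For $2 \leq k \leq |A|$ (note $r(\ell) \leq |A|$ for a left or right line, since the first or second coordinate is then determined up to the parameter), let $L_k$ be the set of lines $\ell$ with $k \leq r(\ell) < 2k$; then $|L_k| \leq n_k$, and
\begin{equation*}
\sum_{\ell : r(\ell) \geq 3} r(\ell)^3 \ll \sum_{j=1}^{\log_2 |A|} (2^j)^3 \, n_{2^j}.
\end{equation*}
Now substitute the bound $n_k \ll_\epsilon |P|^{2+\epsilon}/k^3 + |P|/k = |A|^{4+\epsilon}/k^3 + |A|^2/k$ from Corollary~\ref{co:solymosi-tao}. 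The first term contributes $|A|^{4+\epsilon}$ per dyadic scale, hence $|A|^{4+\epsilon}\log|A|$ after summing over the $O(\log|A|)$ scales; the second term contributes $\sum_j 2^{2j} |A|^2 \ll |A|^2 \cdot |A|^2 = |A|^4$ (geometric sum dominated by the top scale $2^j \approx |A|$). Adding the $O(1)$ types back in, this gives $T \ll_\epsilon |A|^{4+\epsilon}\log|A|$, as claimed.

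The only mild subtlety — and the one point deserving care rather than difficulty — is the range of $k$: for lr-mixed and rl-mixed lines one must check that a line still carries at most $O(|A|)$ points of $A \times A$ (or simply cap the dyadic sum at $k \leq |P| = |A|^2$, which only changes the geometric sum's top term to $|A|^2 \cdot |A|^2 = |A|^4$ and is harmless), and that degenerate "lines" with $(c,d)=(0,0)$ are excluded by definition so that the same-type intersection property holds and triples are not massively overcounted. There is no real obstacle here; the content is entirely in Corollary~\ref{co:solymosi-tao}, and this lemma is a routine dyadic summation on top of it.
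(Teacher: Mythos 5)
Your proposal is correct and follows essentially the same argument as the paper: a dyadic decomposition of lines by richness, the bound $\binom{m}{3}\ll m^3$ per line, and substitution of Corollary~\ref{co:solymosi-tao} with $|\pts|=|A|^2$, with the first term giving $|A|^{4+\epsilon}$ per scale and the second a geometric sum dominated by $|A|^4$. Your extra remarks about the $O(1)$ line types and the $O(|A|)$ cap on points per line are sound and only make explicit what the paper leaves implicit.
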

\begin{proof}
Consider a partitioning of the set of lines into dyadic blocks, where the $i^{th}$ block consists of lines containing between $2^i$ and $2^{i+1}$ points, with $1 \leq i \leq \lceil \log |A| \rceil$. Let $n_i$ be the number of lines in the $i^{th}$ set. 
Then the number of collinear triples is
\begin{equation}
\label{eq:triples}
T \leq \sum_{i = 1}^{\lceil \log |A| \rceil} \left(2^{i+1}\right)^3 n_{i}.
\end{equation}
This allows us to use Corollary~\ref{co:solymosi-tao}, with $|\pts| = |A|^2$ and $\epsilon' = \epsilon/2$, to bound $n_i$, giving
\begin{align*}
T & \leq \sum_{i = 1}^{\lceil \log |A| \rceil} \left(2^{i+1}\right)^3 n_{i}  \ll_{\epsilon'} \sum_{i = 1}^{\lceil \log |A| \rceil} \left(2^{i+1}\right)^3 \left(\frac{|A|^{4+{\epsilon'}}}{(2^{i})^3} + \frac{|A|^2}{2^{i}} \right)\\
& \ll \sum_{i = 1}^{\lceil \log |A| \rceil} \left(|A|^{4+{\epsilon'}} + |A|^2 2^{2i} \right)  \ll |A|^{4+2\epsilon'}.
\end{align*}
\end{proof}

%%%%%%%%%%%%%%%%%%%%%%%%%%%%%%%%%%%%%%%%%%%%%%%%%%%%%
%% Section: Energies
%%%%%%%%%%%%%%%%%%%%%%%%%%%%%%%%%%%%%%%%%%%%%%%%%%%%%

\section{Bounding energies}
\label{sec:energies}

In this section we collect various bounds on the sizes of sum and product in terms of energies. All the results and proofs presented in this section have appeared in various papers. We simply present the proofs and adapt them to our setting.

%%%%%%%%%%%%%%%%%%%%%%%%%%%%%%%%%%%%%%%%%%%%%%%%%%%%%
%%%%%%%%%%%%%%%%%%%%%%%%%%%%%%%%%%%%%%%%%%%%%%%%%%%%%

First, we give some basic estimates on additive energies using the Cauchy-Schwarz inequality. The following appears as Lemma 2.4 and 2.5 in~\cite{li11}.
\begin{lemma}
\label{le:e1.5e3}
Let $A, B$ be finite sets of quaternions. Then
\[ |B|^2E^+_{1.5}(A)^2 \leq E_{3}^+(A)^{2/3}  E_{3}^+(B)^{1/3} E^+(A, A+B). \]
\end{lemma}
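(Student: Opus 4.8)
The plan is to prove the inequality by a double application of the Cauchy–Schwarz inequality, treating the quantity $E^+_{1.5}(A)$ as a sum of representation functions and inserting the shift set $B$. First I would expand
\[
|A|^2 E^+_{1.5}(A) = |A|^2 \sum_x \delta_{A-A}(x)^{3/2} = \sum_{x}\sum_{b\in B} \delta_{A-A}(x)^{3/2},
\]
since the inner sum over $b$ simply multiplies by $|B|$... but in fact the right move is to introduce $B$ multiplicatively inside the representation count: note that $\delta_{A-A}(x) \le \delta_{(A+B)-(A+B)}(x)$ is false in general, so instead I would write each term $\delta_{A-A}(x)$ using the identity $\delta_{A-A}(x) = \sum_{b} \delta_{(A+b)-(A+b)}(x)/|B|$ after summing over $b\in B$. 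Concretely, for a fixed $x$, $\sum_{b\in B}\delta_{(A+b)-(A+b)}(x) = |B|\,\delta_{A-A}(x)$, because translating both copies of $A$ by the same $b$ does not change the difference set multiplicities. This gives
\[
|A|^2 E^+_{1.5}(A)^2 = \frac{1}{|B|^2}\left(\sum_x \delta_{A-A}(x)^{1/2}\cdot |A|\sum_{b\in B}\delta_{(A+b)-(A+b)}(x)\right)^2.
\]

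Next I would apply Cauchy–Schwarz in $x$ to split off the factor $\delta_{A-A}(x)^{1/2}$ from the rest. This produces one factor $\sum_x \delta_{A-A}(x) = |A|^2$ (which cancels against the leading $|A|^2$, up to bookkeeping), and a second factor of the form $\sum_x \big(\sum_{b\in B}\delta_{(A+b)-(A+b)}(x)\big)^2$. Expanding that square and recognizing it as counting quadruples, one sees it equals $E^+(A, A+B)$ after using the standard identity $E^+(A,A+B)=\sum_x \delta_{(A+B)-(A+B)}(x)\cdot(\text{something})$; more precisely $\sum_x\big(\sum_b \delta_{(A+b)-(A+b)}(x)\big)^2$ counts sextuples $(a_1,a_2,b,a_3,a_4,b')$ with $a_1-a_2 = a_3-a_4$ and the two shifted differences equal to a common $x$, and collapsing the $b,b'$ sums this is bounded by $E_3^+(B)^{?}\cdot(\dots)$. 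To get the asymmetric exponents $E_3^+(A)^{2/3}E_3^+(B)^{1/3}$, I would instead interpolate: apply Hölder's inequality with exponents $(3, 3, 3)$ to the triple sum $\sum_{x}\delta_{A-A}(x)^{1/2}\delta_{A-A}(x)^{1/2}\big(\sum_b\cdots\big)$ rather than plain Cauchy–Schwarz, so that the $\delta_{A-A}$ mass gets distributed as a $2/3$-power against one $1/3$-power carrying the $B$-dependence, which after unwinding the representation-function identities yields exactly $E_3^+(A)^{2/3}E_3^+(B)^{1/3}$ on the two Hölder factors and $E^+(A,A+B)$ on the third.

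The main obstacle I anticipate is getting the \emph{exact} split of exponents right: the asymmetry $2/3$ versus $1/3$ between $E_3^+(A)$ and $E_3^+(B)$ forces a careful choice of which representation functions to pair in the Hölder step, and a naive Cauchy–Schwarz will only give the symmetric bound $E_3^+(A)^{1/2}E_3^+(B)^{1/2}\cdot E^+(A,A+B)$ with worse (or merely different) shape. The trick is presumably to write $\delta_{A-A}(x) = \sum_{b,b'\in B}[\text{incidence condition}]$ in a way that puts two ``copies'' of the $A$-structure and one ``copy'' of the $B$-structure into the eventual Hölder triple, i.e.\ to realize $|A|^2 E^+_{1.5}(A)$ as a sum over a set indexed so that Hölder with weights $(2/3,2/3,\dots)$ on the $A$-part and the remaining weight on the $B$-part is natural. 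Once the correct combinatorial identity for $|A|^2\delta_{A-A}(x)$ in terms of triples from $A$ and a shift from $B$ is in hand, the rest is a routine Hölder/Cauchy–Schwarz computation, and I would simply cite that this is the argument of Lemma 2.4–2.5 of~\cite{li11} adapted verbatim, since nothing here uses commutativity of multiplication (only additive structure is involved).
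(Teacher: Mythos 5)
There is a genuine gap: your proposal names the right tools (Cauchy--Schwarz, H\"older) but never finds the decomposition that makes the statement true, and the identity you propose in its place is vacuous. Writing $\delta_{A-A}(x) = \frac{1}{|B|}\sum_{b\in B}\delta_{(A+b)-(A+b)}(x)$ is just translation invariance --- every summand equals $\delta_{A-A}(x)$ --- so it introduces no coupling between $A$ and $B$, and the quantity $\sum_x\bigl(\sum_b \delta_{(A+b)-(A+b)}(x)\bigr)^2$ collapses to $|B|^2E^+(A)$, which cannot produce the factor $E^+(A,A+B)$. Likewise, your plan to generate $|A|^2$ as $\sum_x\delta_{A-A}(x)$ inside a Cauchy--Schwarz step puts that factor on the wrong side: it would appear as an upper-bound multiplier of the right-hand side, whereas in the lemma the $|A|^2$ (in the paper's proof, really a $|B|^2$, which is the same thing in the application $B=A$) must be absorbed into the left-hand side. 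That factor has a specific source which your sketch never sets up.

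The actual argument works fiberwise. Define $A_x = A\cap(A+x)$, so $|A_x|=\delta_{A-A}(x)$. The Cauchy--Schwarz energy bound applied to the pair $(A_x,B)$ gives, for each $x$, the inequality $|A_x|^{1.5}|B|\le E^+(A_x,B)^{1/2}|A_x+B|^{1/2}|A_x|^{1/2}$; summing over $x\in A-A$ and applying Cauchy--Schwarz once more yields $E^+_{1.5}(A)^2|B|^2 \le \bigl(\sum_x E^+(A_x,B)\bigr)\bigl(\sum_x |A_x+B|\,|A_x|\bigr)$. Two further ingredients, both absent from your proposal, finish the proof: the inclusion $A_x+B\subseteq (A+B)\cap\bigl((A+B)+x\bigr)$, which converts $\sum_x|A_x+B|\,|A_x|$ into at most $\sum_x \delta_{(A+B)-(A+B)}(x)\,\delta_{A-A}(x) = E^+(A,A+B)$; and the symmetry $\delta_{A_x-A_x}(y)=\delta_{A_y-A_y}(x)$, which after swapping the order of summation gives $\sum_x E^+(A_x,B) = \sum_y \delta_{A-A}(y)^2\delta_{B-B}(y)$, and H\"older with exponents $3/2$ and $3$ then produces exactly the asymmetric split $E_3^+(A)^{2/3}E_3^+(B)^{1/3}$ that you were trying to guess. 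You are right that only additive structure is used and that the argument is the one in Lemma 2.4--2.5 of the cited reference, but citing it does not substitute for the missing combinatorial skeleton: without the sets $A_x$, the pointwise energy lower bound, and the two identities above, the steps you wrote down do not assemble into a proof.
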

\begin{proof}
Let $A_x = A \cap (A + x)$, and note that $|A_x| = \delta_{A-A}(x)$. Equation~\eqref{eq:additivecs} implies that
\begin{equation*}
|A_x|^{1.5}|B| \leq E^+(A_x, B)^{1/2}|A_x + B|^{1/2} |A_x|^{1/2}.
\end{equation*}
This gives 
\begin{align}
\label{eq:b1}
\nonumber E^+_{1.5}(A)^2|B|^2 & = \left(\sum_{x \in A-A} \left(|A_x|^{1.5}|B|\right)\right)^2\\
\nonumber &\leq \left( \sum_{x \in A-A} \left(E^+(A_x, B)^{1/2}|A_x + B|^{1/2}|A_x|^{1/2} \right)\right)^2\\
\nonumber &\leq \sum_{x \in A-A} E^+(A_x, B) \sum_{x \in A-A} |A_x + B| |A_x|\\
\nonumber &\leq \sum_{x \in A-A} E^+(A_x, B) \sum_{x \in A-A} |(A + B)_x| |A_x|\\
&= \left(\sum_{x \in A-A} E^+(A_x, B)\right) E^+(A, A+B).
\end{align}
where the second inequality follows from the Cauchy-Schwarz inequality, and the third inequality is implied by the set inclusion $A_x + B \subseteq (A + B)_x$.

Next, note that
\begin{align}
\label{eq:b2}
\nonumber \sum_{x} E^+(A_x, B) &= \sum_{x} \sum_{y} \delta_{A_x - A_x}(y) \delta_{B-B}(y)
= \sum_{x} \sum_{y} \delta_{A_y - A_y}(x) \delta_{B-B}(y)\\
\nonumber &= \sum_{y} \sum_{x} \delta_{A_y - A_y}(x) \delta_{B-B}(y)
= \sum_{y}  \delta_{A - A}(y)^2 \delta_{B-B}(y)\\
&\leq \left( \sum_{y}  \delta_{A - A}(y)^3\right)^{2/3} \left(\sum_{y} \delta_{B-B}(y)^3 \right)^{1/3}
= E_3^+(A)^{2/3}E_3^+(B)^{1/3},
\end{align}
where the second to last step is a standard application of H\"older's inequality.

The statement of the lemma now follows by combining~\eqref{eq:b1}~and~\eqref{eq:b2}.

\end{proof}

%%%%%%%%%%%%%%%%%%%%%%%%%%%%%%%%%%%%%%%%%%%%%%%%%%%%%
%%%%%%%%%%%%%%%%%%%%%%%%%%%%%%%%%%%%%%%%%%%%%%%%%%%%%

The following lemma establishes a connection between the additive energy and the size of the product set, and appears as Theorem~9 in~\cite{ks15}. The technique used here was introduced by Elekes and Ruza in~\cite{er03}. 
\begin{lemma}
\label{le:additive}
Let $\epsilon > 0$, and $A$ be a finite set of quaternions, with $0 \notin A$. Then \[ E^+(A)^4 \ll_{\epsilon} |AA| |A|^{10+\epsilon}. \]
\end{lemma}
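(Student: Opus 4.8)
The plan is to use the Elekes–Ruzsa method: encode the additive energy $E^+(A)$ via rich lines in the point set $A \times A \subset \HH^2$, and then bound the number of such lines using the incidence machinery already available (Corollary~\ref{co:solymosi-tao} or Lemma~\ref{le:collinear-triples}), with the product set $|AA|$ entering through a clever parametrization of the slopes. First I would recall the standard real-number argument: for each popular difference $x$ with $\delta_{A-A}(x)$ large, the pairs realizing that difference correspond to points $(a,a')$ with $a - a' = x$, and these can be lined up along quaternionic lines whose "slope" is controlled by ratios of elements of $A$; the number of distinct slopes is at most $|AA|$ (or $|A/A|$, which is what actually shows up, and which one bounds by $|AA|$ after the usual manipulation), while the number of collinear configurations is bounded by Lemma~\ref{le:collinear-triples}.

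Concretely, I would proceed as follows. Fix a dyadic scale and let $P \subseteq A - A$ be the set of differences $x$ with $\delta_{A-A}(x) \approx \tau$, so that $E^+(A) \lesssim \tau^2 |P| \cdot (\text{number of dyadic scales})$, hence up to logarithmic factors $E^+(A) \lesssim \tau^2|P|$ for the dominant scale and $\tau |P| \lesssim |A|^2$. For a fixed such $x$, consider the $\approx \tau$ pairs $(a,b)$ with $a - b = x$; map each such pair to a point in $\HH^2$ (for instance $(a, b)$ or $(ab^{-1}, \cdot)$ as in the Elekes–Ruzsa setup) lying on a quaternionic line determined by $x$ and a ratio coordinate. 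One arranges matters so that two pairs coming from the same $x$ and giving the same ratio lie on a common quaternionic line, and the total number of lines used is $\ll |P| \cdot |AA|$ (each line indexed by a difference $x \in P$ together with a product/ratio value). Each such line is $\approx \tau$-rich, so Corollary~\ref{co:solymosi-tao} gives $|P|\,|AA| \ll_\epsilon |A|^{4+\epsilon}/\tau^3 + |A|^2/\tau$. Multiplying through by $\tau^3$ and using $\tau|P| \lesssim |A|^2$ together with $E^+(A) \lesssim \tau^2 |P|$ one extracts, after eliminating $\tau$ between the relations (taking worst case over the dyadic scales, which costs only the $\lesssim$), a bound of the shape $E^+(A)^4 \lesssim_\epsilon |AA|\,|A|^{10+\epsilon}$. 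The non-commutativity of $\HH$ is handled exactly as the paper promises: one is careful to use a single fixed \emph{type} of quaternionic line (left, right, or mixed) throughout, so that the "two lines of the same type meet in at most one point" property and the Solymosi–Tao incidence bound both apply.

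The step I expect to be the main obstacle is the bookkeeping in the geometric encoding when multiplication is noncommutative: in the real/complex case one writes the popular-difference pairs as points on lines through the origin with a well-defined slope, and $|A/A| \le |AA|$; over $\HH$ one must choose left- versus right-multiplication consistently so that (i) the map from pairs to incidences is injective on each line, (ii) all the lines produced are of one common type, and (iii) the count of distinct "slope" values is genuinely bounded by $|AA|$ rather than something larger. Once the right parametrization is pinned down — essentially mimicking Theorem~9 of~\cite{ks15} but replacing division by multiplication-by-inverse on a fixed side — the incidence estimate and the arithmetic are routine. I would also double-check that the logarithmic losses from the dyadic decomposition (over both the difference multiplicities and, if needed, a second parameter) are all absorbed into the $\lesssim_\epsilon$, which is consistent with the statement as given.
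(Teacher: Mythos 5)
There is a genuine gap, and it sits exactly at the step you flag as ``routine'': the central counting inequality is logically backwards. You index your lines by a pair (difference $x\in P$, ratio value), observe that the number of such lines is \emph{at most} $|P|\,|AA|$, assert that each is $\approx\tau$-rich, and then feed $|P|\,|AA|$ into the left-hand side of Corollary~\ref{co:solymosi-tao}. But that corollary gives an \emph{upper} bound on the number of $\tau$-rich lines; to conclude $|P|\,|AA|\ll_\epsilon |A|^{4+\epsilon}\tau^{-3}+|A|^2\tau^{-1}$ you would need a \emph{lower} bound $\gg|P|\,|AA|$ on the number of distinct $\tau$-rich lines, which you never establish and which is not true under your indexing: a line labelled by $(x,r)$ contains only the pairs realizing $x$ whose ratio coordinate is $r$, so it need not be $\tau$-rich at all (it may carry a single point), and since each $x\in P$ has only $\approx\tau$ representing pairs it spawns at most $\tau$ lines, so the family you use has size $\leq|P|\tau$, not $\gtrsim|P|\,|AA|$. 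A symptom of the reversal is that $|AA|$ ends up on the wrong side: your intermediate inequality yields an upper bound on $|AA|$, equivalently an upper bound on $E^+(A)$ that \emph{improves} as $|AA|$ grows, which is the opposite of the lemma and is already false for an arithmetic progression (there $E^+(A)\approx|A|^3$, $|AA|=|A|^{2-o(1)}$, and at the dominant scale $\tau\approx|P|\approx|A|$, so $|P|\,|AA|\approx|A|^{3-o(1)}$ while the right-hand side is $\approx|A|^{1+\epsilon}$). A smaller inaccuracy: the asserted inequality $|A/A|\leq|AA|$ is not valid in general, even over $\RR$; one only has Ruzsa-type bounds such as $|A/A|\leq|AA|^2/|A|$.

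The missing idea is \emph{how} $|AA|$ has to enter: through a Cauchy--Schwarz lower bound on a count that is then upper-bounded by incidence geometry, not through an upper bound on the size of a line family. The paper first refines to the popular sums $F=\{x:\delta_{A+A}(x)>E^+(A)/2|A|^2\}$ and, for $a\in A$, the fibers $F_a=\{b\in A:a+b\in F\}$; for fixed $(a,b)\in A^2$, every quadruple $(c,d,e,f)\in(F_a\times F_b)^2$ with $cd=ef$ (equivalently $df^{-1}=c^{-1}e$) places the three points $(a,b)$, $(a+c,b+f)$, $(a+e,b+d)$ on the mixed quaternionic line $\{(a,b)+(ct,tf):t\in\HH\}$, and by~\eqref{eq:multiplicativecs} there are at least $|F_a|^2|F_b|^2/|F_aF_b|\geq|F_a|^2|F_b|^2/|AA|$ such quadruples, since $F_a,F_b\subseteq A$; this is where the product set appears, in the denominator of a \emph{lower} bound for the number of collinear triples. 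Summing over $(a,b)$ and comparing with the upper bound of Lemma~\ref{le:collinear-triples} applied to the point set $(A\cup F)\times(A\cup F)$, whose size is controlled by $|A|^2\delta_F/E^+(A)$ (this is what produces the exponent $4$ on $E^+(A)$), gives the lemma. Your sketch contains none of these three ingredients --- the popular-sum refinement, the multiplicative-energy Cauchy--Schwarz step, and the double count of collinear triples --- and the slope-parametrization plan as written cannot be repaired into a proof of the statement.
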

\begin{proof}
In order to bound $E^+(A)$, we may restrict our attention to elements that have many realizations in $A+A$. Let $F$ be the set of such elements, i.e., \[ F = \left\{ x \in A+A: \delta_{A+A}(x) > \frac{E^+(A)}{2|A|^2} \right\}. \]
Now \[  \sum_{x \notin F} \delta_{A+A}(x)^2 \leq \frac{E^+(A)}{2|A|^2} \sum_{x \notin F} \delta_{A+A}(x) \leq \frac{E^+(A)}{2}. \]
In particular, we have \[  E^+(A) \leq 2 \sum_{x \in F} \delta_{A+A}(x)^2. \]
Let $\mathcal{P} = (A \cup F) \times (A \cup F)$. We will double count the number of collinear triples in $\mathcal{P}$ (where points in a triple are not necessarily distinct), denoted by~$T$.

Denote by $\delta_F$ the quantity $\sum_{x \in F} \delta_{A+A}(x)$. Since, for all $x$, $\delta_{A+A}(x) \leq |A|$, we have
\[  \delta_F = \sum_{x \in F} \delta_{A+A}(x) \geq \sum_{x \in F}  \frac{\delta_{A+A}(x)^2}{|A|} \geq \frac{E^+(A)}{2|A|}. \] 
This gives
\[ |F| + |A| \leq \sum_{x \in F} \left( \delta_{A+A}(x) \cdot \frac{2|A|^2}{E^+(A)}\right) + |A|\left( \delta_F\cdot \frac{2|A|}{E^+(A)} \right)
= \frac{4|A|^2\delta_F}{E^+(A)}. \] 
Combining the above with Lemma~\ref{le:collinear-triples} (with $\epsilon' = \epsilon/2$) implies
\begin{equation}
\label{eq:triplesupperbound}
T \ll_{\epsilon'} |A \cup F|^{4+{\epsilon'}} \ll \left( \frac{|A|^2 \delta_F}{E^+(A)} \right)^{4+{\epsilon'}}.
\end{equation}

We now obtain a lower bound on $T$. For each $a \in A$, let $F_a = \{b \in A: a + b \in F\}$. Fix $(a, b) \in A^2$, and consider a quadruple of the form $(c, d, e, f) \in (F_a \times F_b)^2$ with $cd = ef$, or equivalently $df^{-1} = c^{-1}e$. 
Note that the line $\{(a, b) + (ct, tf)  : t \in \HH \}$ contains the points 
\[ (a, b), (a + c, b + f), (a + e, b + d) \in \pts, \]
with $t = 0,\, t = 1$, and $t = df^{-1} = c^{-1}e $ respectively. 
Since the quadruples $(c, d, e, f)$ and $(e, f, c, d)$ give the same collinear triple, by Equation~\eqref{eq:multiplicativecs}, each pair $(a, b) \in A^2$ gives at least \[ \frac{1}{2}\left( \frac{|F_a|^2|F_b|^2}{|F_a F_b|} \right) \gg \frac{|F_a|^2|F_b|^2}{|AA|} \] 
 distinct collinar triples. It follows that the number of collinear triples in $\pts$ is at least
\begin{align}
\label{eq:tripleslowerbound}
\nonumber T &\gg \sum_{a, b \in A} \frac{|F_a|^2|F_b|^2}{|AA|}  = \frac{1}{|AA|} \left(\sum_{a \in A } |F_a|^2\right)^2\\
&\geq \frac{1}{|AA|} \left(\frac{1}{|A|} \left(\sum_{a \in A} |F_a|\right)^2 \right)^2 = \frac{\delta_F^4}{|AA||A|^2}.
\end{align}
Combining Equations~\eqref{eq:triplesupperbound}~and~\eqref{eq:tripleslowerbound} gives:
\[ E^+(A)^{4 + {\epsilon'}} \ll_{\epsilon'} |A|^{10 + 2{\epsilon'}} \delta_F^{{\epsilon'}} |AA|. \]
Finally, since $E^+(A) \geq |A|^2$ and $\delta_F = \sum_{x \in F} \delta_{A+A}(x) \leq |A|^2$,
we get \[ E^+(A)^{4} \ll_\epsilon |AA||A|^{10 + \epsilon}. \]
\end{proof}

%%%%%%%%%%%%%%%%%%%%%%%%%%%%%%%%%%%%%%%%%%%%%%%%%%%%%
%%% d_* definition
%%%%%%%%%%%%%%%%%%%%%%%%%%%%%%%%%%%%%%%%%%%%%%%%%%%%%

An important tool in connecting incidence results and sum-product estimates is the quantity $d_*(A)$ defined below, which captures the multiplicative structure of $A$. We define \[ d_*(A) := \min_{t > 0} \min_{\varnothing \neq Q, R \subset \HH \setminus \{0\}} \frac{|Q|^2|R|^2}{|A|t^3}, \] where the second minimum is taken over all sets $Q$ and $R$ such that $|Q| \geq \max\{|A|, |R|\}$, and, for every $a \in A$, the bound $|Q \cap R a| \geq t$ holds.

\vspace{-0.5em}\paragraph{Comment:} Note that the definition of $d_*(A)$ is different from that in \cite{ks16}, where it was only required that $\max\{|Q|, |R|\} \geq |A|$. Non-commutativity of multiplication for the quaternions means that we require the stronger condition that $|Q| \geq \max\{|A|, |R|\}$. When $A$ is a subset of the real numbers (or the complex numbers), we can assume, without loss of generality, that $|Q| \geq |R|$. This change in definition does not affect any of the proofs later.\\

The sets $Q=A$ and $R=\{1\}$ show that $d_*(A) \leq |A|$. Roughly speaking, the closer $d_*(A)$ is to $|A|$, the less multiplicative structure $A$ has.  The following is a key lemma concerning $d_*(A)$ and will be used to connect additive energies with the product set. The version for real numbers appeared as Lemma~13 in~\cite{ks16}.

%%%%%%%%%%%%%%%%%%%%%%%%%%%%%%%%%%%%%%%%%%%%%%%%%%%%%
%%%%%%%%%%%%%%%%%%%%%%%%%%%%%%%%%%%%%%%%%%%%%%%%%%%%%

\begin{lemma}
\label{le:stsets}
Let $\epsilon > 0$, and $A, B$ be finite sets of quaternions, and $\tau \geq 1$ an integer. Then \[ |\{x : \delta_{A-B}(x) \geq \tau\}| \ll_\epsilon \frac{|A||B|^{2 + \epsilon}}{\tau^3} d_*(A)^{1+\epsilon}. \]
\end{lemma}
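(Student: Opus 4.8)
The plan is to pass from the level set $P := \{x : \delta_{A-B}(x) \geq \tau\}$ to an incidence problem between points and quaternionic lines, apply Corollary~\ref{co:solymosi-tao}, and then use the definition of $d_*(A)$ to convert the resulting bound into the claimed form. First I would fix, for each $x \in P$, at least $\tau$ pairs $(a,b) \in A \times B$ with $a - b = x$; this gives a subset $S_x \subseteq A \times B$ of size $\geq \tau$ such that for all $(a,b) \in S_x$ we have $a - b = x$. The key geometric observation is that if $(a,b)$ and $(a',b')$ both lie in $S_x$, then $a - a' = b - b'$, so the difference structure is controlled by $A - A$ and $B - B$ simultaneously. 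I want to recast ``$\delta_{A-B}(x) \geq \tau$ for many $x$'' as ``many lines of a fixed quaternionic type are $\tau$-rich'' on a point set of size roughly $|A||B|$ or so.

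Concretely, following the Konyagin--Shkredov template, I would bring in the sets $Q, R$ realizing (approximately) the minimum in $d_*(A)$: sets with $\max\{|Q|,|R|\} \geq |A|$ and $|Q \cap Rx| \geq t$ for every $x \in A$, with $|Q|^2|R|^2 \approx |A| t^3 d_*(A)$. For each $x \in A$ one fixes $t$ pairs $(q, r) \in Q \times R$ with $q = rx$. The idea is then to build a point set $\pts \subseteq \HH^2$ and a family of quaternionic lines — of one of the four types (left/right/lr-mixed/rl-mixed), chosen so that the non-commutativity works out — indexed by elements of $B$ and of $R$ (or $A$ and $Q$), such that each $x$ with $\delta_{A-B}(x) \geq \tau$ forces one of these lines to contain $\gg t\tau$ points of $\pts$, where $|\pts| \ll |B|^2 |R|$ or a similar product. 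Then Corollary~\ref{co:solymosi-tao} bounds the number of such $(t\tau)$-rich lines by $\ll_\epsilon |\pts|^{2+\epsilon}/(t\tau)^3 + |\pts|/(t\tau)$; since there are at least $|P|$ such lines, rearranging and substituting $|\pts|$ and then $|Q|^2|R|^2 \approx |A| t^3 d_*(A)$ should collapse the $t$-dependence and yield $|P| \ll_\epsilon |A||B|^{2+\epsilon} d_*(A)^{1+\epsilon}/\tau^3$, after absorbing $\log$ factors and lower-order terms via the hypotheses $\tau \geq 1$ and $\max\{|Q|,|R|\} \geq |A|$.

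The main obstacle, and the place where real care is needed, is the exact construction of the points and lines so that (i) all lines are of the \emph{same} quaternionic type (Theorem~\ref{th:solymosi-tao} and its corollary require this), (ii) the correspondence ``$x$ with many $A-B$ representations'' $\mapsto$ ``a rich line'' is actually injective or bounded-to-one, and (iii) the quaternion multiplications line up despite non-commutativity — this is precisely why the lr-mixed and rl-mixed line types were introduced, and one must check that the relation $a - b = x$ combined with $q = rx$ can be written as collinearity on a line of the form $\{(\cdot,\cdot) + (ct, td) : t \in \HH\}$ or $\{(\cdot,\cdot) + (tc, dt) : t\in\HH\}$. A secondary technical point is the optimization over $t$: one should verify that the two terms in Corollary~\ref{co:solymosi-tao} balance in the right regime and that the ``$+|\pts|/(t\tau)$'' term is genuinely lower order given $\tau \geq 1$; if not, one splits into cases on whether $t\tau$ is large or small relative to $|\pts|^{1/3}$, exactly as in the proof of Lemma~\ref{le:collinear-triples}. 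Finally, the passage from a near-optimal choice of $Q, R$ (the minimum in $d_*(A)$ may not be attained) to a genuine one costs only a constant factor and can be handled by taking sets within a factor $2$ of the infimum.
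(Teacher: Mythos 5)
Your overall template is the right one (introduce $Q,R,t$ witnessing $d_*(A)$, encode the mixed relation ``$a-b=x$, $q=ra$'' by quaternionic lines of a single type, apply the Solymosi--Tao machinery, then minimize over $Q,R,t$), but the step you flag as the ``main obstacle'' is exactly where the argument as you describe it breaks, and it is not a checkable detail but the crux. You want each $x$ with $\delta_{A-B}(x)\geq\tau$ to produce \emph{one} line containing $\gg t\tau$ points, so that Corollary~\ref{co:solymosi-tao} (the rich-lines bound) applies with $k\approx t\tau$ and at least $|P|$ rich lines. This cannot be realized by the natural construction: writing $q=ra=r(b+x)$, the incidence condition is $r^{-1}q-b=x$, so the relevant line $l_{r,x}=\{(y,z):r^{-1}z-y=x\}$ necessarily depends on $r$ as well as $x$ (the dilation by $r$ is part of the line's direction). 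Hence the $\geq t\,\delta_{A-B}(x)\geq t\tau$ representations attached to a fixed $x$ are spread over $|R|$ distinct lines, and no single line need be $(t\tau)$-rich. Repairing this by pigeonhole (some $l_{r,x}$ has $\geq t\tau/|R|$ points) and then invoking Corollary~\ref{co:solymosi-tao} yields $|P|\ll_\epsilon (|Q||B|)^{2+\epsilon}|R|^{3}/(t\tau)^3+\cdots$, and the factor $|R|^3$ in place of $|R|^{2+\epsilon}$ is too lossy to recover the stated bound after substituting $|Q|^2|R|^2\approx|A|t^3d_*(A)$.

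The paper avoids this by working with incidences rather than rich lines: it takes $\pts=Q\times B$ and the family $\lines=\{l_{r,d}:r\in R,\ d\in D_\tau\}$ of $|R||D_\tau|$ lines (all rl-mixed, so Theorem~\ref{th:solymosi-tao} applies), and uses $t\tau|D_\tau|\leq t\sigma\leq|\I(\pts,\lines)|$. The point is that the term $|\lines|^{2/3}=(|R||D_\tau|)^{2/3}$ puts $|D_\tau|^{2/3}$ on the right-hand side, and rearranging gives $|D_\tau|\ll_\epsilon |Q|^{2+\epsilon}|B|^{2+\epsilon}|R|^{2}/(t\tau)^3$, which is what the definition of $d_*(A)$ converts into the lemma. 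Your proposal also defers the lower-order terms to ``absorbing via $\tau\geq1$'', but in the paper these require a genuine case analysis: one first disposes of the regime $t^2\tau^2\ll|Q||R||B|$ by the trivial bound $\sigma\leq|Q||R||B|/t$, and then rules out the $|\pts|$ and $|\lines|$ terms dominating by contradiction, using $t\leq|R|$, $\tau\leq|B|$, $\tau\leq|A|\leq\max\{|Q|,|R|\}$. Finally, the observation in your first paragraph ($a-a'=b-b'$ for two representations of the same $x$) plays no role and does not lead anywhere here. So the write-up, as it stands, has a genuine gap at its central reduction; switching from the rich-lines corollary to a direct incidence count with lines indexed by pairs $(r,x)$ is the fix.
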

\begin{proof}
Note that we may assume $A$ and $B$ are large (with respect to the hidden constant), since otherwise the statement is trivially true. Let $Q, R \subset \HH\setminus\{0\}$ with $|Q| \geq \max\{|A|, |R|\}$, and $t > 0$ an integer such that, for all $a \in A$, $|Q \cap R a| \geq t$. For any integer $\tau \geq 1$, let
\[D_\tau = \{x \in A - B: \delta_{A - B}(x) \geq \tau \}. \]
To prove the lemma, it suffices to show that
\begin{equation}
\label{eq:required}
|D_\tau| \ll_\epsilon \frac{|Q|^{2+\epsilon}|R|^{2+\epsilon}|B|^{2+\epsilon}}{t^3\tau^3}.
\end{equation}
Let $\sigma = |\{ (a, b, x) : a - b = x, \, a \in A, b \in B, x \in D_\tau \}|$, and note that, by definition,
\begin{equation}
\label{eq:sigmalb}
\sigma = \sum_{x \in D_\tau} \delta_{A-B}(x) \geq \tau|D_\tau|.
\end{equation}
On the other hand, for each $a \in A$, $|Q \cap Ra| \geq t$ implies that $\delta_{R^{-1}Q}(a) \geq t$. 
This gives
\begin{equation}
\label{eq:sigmaub}
\sigma \leq |\{(r, q, b, x): r^{-1}q - b = x,\, r \in R, q \in Q, b \in B, x \in D_\tau\}|\cdot t^{-1}.
\end{equation}
By ignoring the condition $x \in D_\tau$, Equation~\eqref{eq:sigmaub} immediately implies the bound
\begin{equation}
\label{eq:sigmaub-easy}
\sigma \leq \frac{|Q||R||B|}{t}.
\end{equation}
When $t^2\tau^2 \ll |Q||R||B|$, Equations~\eqref{eq:sigmalb}~and~\eqref{eq:sigmaub-easy} imply
\[ |D_\tau| \leq \frac{|Q||R||B|}{t\tau} \ll \frac{|Q|^2|R|^2|B|^2}{t^3\tau^3}, \]
allowing us to restrict our attention to the case when $t^2\tau^2 \gg |Q||R||B|$.

In this scenario, we reformulate Equation~\eqref{eq:sigmaub} as an incidence problem. For $r \in R$ and $d \in D_\tau$, let $l_{r,d} = \{ (x, y) : r^{-1}y - x = d \}$ be a quaternionic line. Consider the family $\lines = \{ l_{r,d}:  r \in R, d \in D_\tau \}$ of $|R||D_\tau|$ lines. Let $\pts$ be the point set $B \times Q$. Now, by Theorem~\ref{th:solymosi-tao},  for $\epsilon' = \epsilon/3$, we have
\begin{align}
\label{eq:sigmaub1}
\nonumber \sigma & \leq |\I(\pts, \lines)| \cdot t^{-1}\\
& \ll \left(A_{\epsilon'} |\pts|^{2/3 + \epsilon'}|\lines|^{2/3} + |\pts| + |\lines|\right) \cdot t^{-1}.
\end{align}
If the first term in Equation~\eqref{eq:sigmaub1} dominates, we have
\begin{equation*}
t \tau |D_\tau| \ll A_{\epsilon'} |\pts|^{2/3 + \epsilon'}|\lines|^{2/3} = A_{\epsilon'}(|Q||B|)^{2/3 + \epsilon'}(|R||D_\tau|)^{2/3}.
\end{equation*}
Rearranging gives
\begin{align*}
|D_\tau| & \ll_\epsilon \frac{|Q|^{2 + \epsilon}|B|^{2 + \epsilon}|R|^2}{t^3\tau^3}.
\end{align*}

Now suppose that the bound \eqref{eq:required} does not hold. If the second term dominates, then
\begin{equation}
\label{eq:stcontradiction1}
\frac{|Q|^2|R|^2|B|^2}{t^2\tau^2} \ll t \tau |D_\tau| \ll |\pts| = |Q||B|.
\end{equation}
Note that we must have $t \leq \min\{|Q|, |R|\} = |R|$, $\tau \leq |B|$, and $\tau \leq |A| \leq |Q|$. Together with Equation~\eqref{eq:stcontradiction1}, this gives a contradiction.

Finally, if the third term dominates, then
\[ t\tau |D_\tau| \ll |\lines| = |R||D_\tau|. \]
Combined with the assumption that $t^2\tau^2 \geq |Q||R||B|$, this implies
\[ |Q||B||R| \ll |R|^2. \]
But this is a contradiction, since $|Q| \geq |R|$, and $B$ is large enough.
\end{proof}

%%%%%%%%%%%%%%%%%%%%%%%%%%%%%%%%%%%%%%%%%%%%%%%%%%%%%
%%%%%%%%%%%%%%%%%%%%%%%%%%%%%%%%%%%%%%%%%%%%%%%%%%%%%
We now give some consequences of Lemma~\ref{le:stsets}. The proof of all three corollaries follows the same basic outline.
\begin{corollary}
\label{co:e1.5}
Let $\epsilon > 0$, and $A$ be a finite set of quaternions. Then
\[ E^+(A) \ll_{\epsilon} |A|^{1 + \epsilon} d_*(A)^{1/3 + \epsilon} E_{1.5}^+(A)^{2/3}. \]
\begin{proof}
Let $\Delta$ be a parameter to be specified later, and recall that
\[ E^+(A) = \sum_{x} \delta_{A-A}(x)^2 = \sum_{x\,:\,\delta_{A-A}(x) < \Delta} \delta_{A-A}(x)^2  + \sum_{x\,:\,\delta_{A-A}(x) \geq \Delta} \delta_{A-A}(x)^2. \]
We first consider sums with fewer than $\Delta$ realizations.
\begin{align}
\label{eq:a1}
\nonumber \sum_{x\,:\,\delta_{A-A}(x) < \Delta} \delta_{A-A}(x)^2  &\leq \max_{x\,:\,\delta_{A-A}(x) < \Delta}\{\delta_{A-A}(x)^{1/2}\}\sum_{x\,:\,\delta_{A-A}(x) < \Delta} \delta_{A-A}(x)^{1.5}\\ 
&\leq \Delta^{1/2}E^+_{1.5}(A).
\end{align}
To bound the contribution of sums with more than $\Delta$ realizations, we use a dyadic decomposition along with Lemma~\ref{le:stsets} (with $B = A$ and $\epsilon' = 3\epsilon/2$).
\begin{align}
\label{eq:a2}
\nonumber \sum_{x\,:\,\delta_{A-A}(x) \geq \Delta} \delta_{A-A}(x)^2 & = \sum_{j = 1}^{\lceil \log |A| \rceil + 1} \sum_{x\,:\, \Delta2^{j-1} \leq \delta_{A-A}(x) < \Delta 2^{j}} \delta_{A-A}(x)^2\\
\nonumber& \ll_{\epsilon'} \sum_{j = 1}^{\lceil \log |A| \rceil + 1} \frac{|A|^{3+\epsilon'}d_*(A)^{1 + \epsilon'}}{(\Delta2^{j-1})^3} (\Delta 2^{j})^2\\
& \ll \frac{|A|^{3+2\epsilon'}d_*(A)^{1 + 2\epsilon'}}{\Delta}.
\end{align}
Combining~\eqref{eq:a1},~\eqref{eq:a2}, and setting $\Delta = \left(\frac{|A|^{3+2\epsilon'}d_*(A)^{1 + 2\epsilon'}}{E^+_{1.5}(A)}\right)^{2/3}$ finishes the proof.
\end{proof}
\end{corollary}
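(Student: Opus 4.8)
The plan is to estimate $E^+(A)=\sum_x \delta_{A-A}(x)^2$ by splitting the frequencies $x$ according to whether $\delta_{A-A}(x)$ lies below or above a threshold $\Delta$ that will be chosen at the end. For the light part, where $\delta_{A-A}(x)<\Delta$, I would use only the crude bound $\delta_{A-A}(x)^2\le \Delta^{1/2}\,\delta_{A-A}(x)^{3/2}$, so that this part contributes at most $\Delta^{1/2}E^+_{1.5}(A)$; the exponent $3/2$ is chosen precisely so that a power of $E^+_{1.5}(A)$ survives the optimization.

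For the heavy part, where $\delta_{A-A}(x)\ge\Delta$, I would run a dyadic decomposition over the ranges $\delta_{A-A}(x)\in[\Delta 2^{j-1},\Delta 2^{j})$ for $j\ge 1$, and on each range apply Lemma~\ref{le:stsets} with $B=A$ and with $\epsilon'$ a small constant multiple of $\epsilon$ (say $\epsilon'=3\epsilon/2$): this bounds the number of such $x$ by $\ll_{\epsilon'} |A|^{3+\epsilon'}d_*(A)^{1+\epsilon'}/(\Delta 2^{j-1})^3$. Multiplying by the per-frequency weight $(\Delta 2^j)^2$ and summing the resulting geometric series in $j$ gives a total contribution $\lesssim_\epsilon |A|^{3+\epsilon'}d_*(A)^{1+\epsilon'}/\Delta^3$.

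It then remains to balance the two estimates. I would take $\Delta=\bigl(|A|^{3+\epsilon'}d_*(A)^{1+\epsilon'}/E^+_{1.5}(A)\bigr)^{2/3}$; with this choice the light term equals $\bigl(|A|^{3+\epsilon'}d_*(A)^{1+\epsilon'}\bigr)^{1/3}E^+_{1.5}(A)^{2/3}=|A|^{1+\epsilon'/3}d_*(A)^{1/3+\epsilon'/3}E^+_{1.5}(A)^{2/3}$, which is within the claimed bound as soon as $\epsilon'\le 3\epsilon$, while the heavy term equals $E^+_{1.5}(A)^2/\bigl(|A|^{3+\epsilon'}d_*(A)^{1+\epsilon'}\bigr)$, which is no larger than the light term since $E^+_{1.5}(A)\le |A|^{5/2}\le |A|^{3+\epsilon'}d_*(A)^{1+\epsilon'}$ (here using $\delta_{A-A}(x)\le|A|$, $\sum_x\delta_{A-A}(x)=|A|^2$, and the easily verified bound $d_*(A)\ge 1$).

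I do not expect a genuine obstacle: this is the standard ``localize at a level, bound the heavy part by an incidence estimate, then optimize the level'' scheme, and all of the real content is already packaged in Lemma~\ref{le:stsets}. The only points needing attention are feeding Lemma~\ref{le:stsets} the right parameters so that, after the cube root taken in the optimization step, the exponent of $d_*(A)$ comes out as $1/3+O(\epsilon)$ rather than larger, and checking that the chosen $\Delta$ really does make the heavy-part bound subdominant --- equivalently, that the light part is the binding constraint --- which is exactly where the cheap a priori inequality $E^+_{1.5}(A)\le |A|^{5/2}$ enters.
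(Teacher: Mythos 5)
Your proposal is correct and follows essentially the same route as the paper: the identical light/heavy split at a threshold $\Delta$, the dyadic application of Lemma~\ref{le:stsets} with $B=A$ and $\epsilon'=3\epsilon/2$, and the same choice $\Delta=\bigl(|A|^{3+\epsilon'}d_*(A)^{1+\epsilon'}/E^+_{1.5}(A)\bigr)^{2/3}$. Your extra check that the heavy term is subdominant (via $E^+_{1.5}(A)\le|A|^{5/2}$ and $d_*(A)\ge 1$) is a point the paper leaves implicit, and it is verified correctly.
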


%%%%%%%%%%%%%%%%%%%%%%%%%%%%%%%%%%%%%%%%%%%%%%%%%%%%%
%%%%%%%%%%%%%%%%%%%%%%%%%%%%%%%%%%%%%%%%%%%%%%%%%%%%%
\begin{corollary}
\label{co:eds}
Let $\epsilon > 0$, and $A, B$ be finite sets of quaternions. Then
\[  E^+(A, B) \ll_\epsilon |A||B|^{3/2 + \epsilon}d_*(A)^{1/2 + \epsilon} . \] 
\end{corollary}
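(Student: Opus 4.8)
The plan is to mirror the proof of Corollary~\ref{co:e1.5}, applying the same dyadic-decomposition strategy directly to $E^+(A,B) = \sum_x \delta_{A-B}(x)^2$. First I would fix a threshold $\Delta \geq 1$, to be optimized at the end, and write
\[ E^+(A,B) = \sum_{x\,:\,\delta_{A-B}(x) < \Delta} \delta_{A-B}(x)^2 \;+\; \sum_{x\,:\,\delta_{A-B}(x) \geq \Delta} \delta_{A-B}(x)^2 . \]
On the first sum I would use $\delta_{A-B}(x)^2 \leq \Delta\,\delta_{A-B}(x)$ over the range of summation together with $\sum_x \delta_{A-B}(x) = |A||B|$, which bounds it by $\Delta\,|A||B|$.

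For the second sum I would split the range $\delta_{A-B}(x) \geq \Delta$ into dyadic blocks $\Delta 2^{j-1} \leq \delta_{A-B}(x) < \Delta 2^j$ for $1 \leq j \leq \lceil \log |A| \rceil + 1$, and for each block apply Lemma~\ref{le:stsets} with threshold $\tau = \lceil \Delta 2^{j-1} \rceil$ (a legitimate integer $\geq 1$, since $\Delta \geq 1$) to bound the number of contributing $x$ by $O_\epsilon\!\big(|A||B|^{2+\epsilon} d_*(A)^{1+\epsilon} (\Delta 2^{j-1})^{-3}\big)$. Multiplying each block's count by $(\Delta 2^j)^2$ and summing the resulting (convergent) geometric series in $j$ yields a total contribution of $O_\epsilon\!\big(|A||B|^{2+\epsilon} d_*(A)^{1+\epsilon}/\Delta\big)$.

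Balancing the two estimates by taking $\Delta = \big(|B|\, d_*(A)\big)^{(1+\epsilon)/2}$ then gives
\[ E^+(A,B) \lesssim_\epsilon \Delta\,|A||B| = |A|\,|B|^{3/2 + \epsilon/2}\, d_*(A)^{1/2 + \epsilon/2} \leq |A|\,|B|^{3/2+\epsilon}\, d_*(A)^{1/2+\epsilon}, \]
using $|B|, d_*(A) \geq 1$; this is the claimed bound. I do not anticipate a genuine obstacle: the argument is essentially identical to that of Corollary~\ref{co:e1.5}, and the only point needing a moment's care is that Lemma~\ref{le:stsets} requires an integer threshold, so one must ensure $\Delta \geq 1$. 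This is automatic because $d_*(A) \geq 1$ — indeed the defining constraints $\max\{|Q|,|R|\} \geq |A|$ and $t \leq \min\{|Q|,|R|\}$ already force $|Q|^2|R|^2/(|A|t^3) \geq \max\{|Q|,|R|\}/|A| \geq 1$. Beyond that, the only care needed is bookkeeping the exponents so that the powers of $|B|$ and $d_*(A)$ come out as $3/2$ and $1/2$ rather than, say, both equal to $1$.
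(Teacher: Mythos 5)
Your proposal is correct and follows essentially the same route as the paper: the same split at a threshold $\Delta$, the bound $\Delta|A||B|$ for the low-multiplicity terms, a dyadic decomposition plus Lemma~\ref{le:stsets} for the high-multiplicity terms, and the same balancing choice $\Delta \approx \left(|B|\,d_*(A)\right)^{(1+\epsilon)/2}$. Your extra remarks on the integrality of the threshold and on $d_*(A)\geq 1$ are fine but not needed beyond what the paper implicitly assumes.
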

\begin{proof}
Let $\Delta$ be a parameter to be specified later. Then
\begin{align*}
E^+(A, B) & = \sum_{x} \delta_{A-B}(x)^2\\
& = \sum_{x\,:\,\delta_{A-B}(x) < \Delta} \delta_{A-B}(x)^2  + \sum_{j = 1}^{\lceil \log |B| \rceil + 1} \sum_{x\,:\, \Delta2^{j-1} \leq \delta_{A-B}(x) < \Delta 2^{j}} \delta_{A-B}(x)^2\\
& \ll_{\epsilon} \Delta |A||B| + \sum_{j = 1}^{\lceil \log |B| \rceil + 1} \frac{|A||B|^{2 + \epsilon}d_*(A)^{1 + \epsilon}}{(\Delta2^{j-1})^3}(\Delta2^j)^2\\
&\ll \Delta |A||B| + \frac{|A||B|^{2 + 2\epsilon}d_*(A)^{1 + 2\epsilon}}{\Delta}.
\end{align*}
Setting $\Delta = \left(|B|^{1 + 2\epsilon}d_*(A)^{1 + 2\epsilon}\right)^{1/2}$ completes the proof.
\end{proof}

%%%%%%%%%%%%%%%%%%%%%%%%%%%%%%%%%%%%%%%%%%%%%%%%%%%%%
%%%%%%%%%%%%%%%%%%%%%%%%%%%%%%%%%%%%%%%%%%%%%%%%%%%%%
\begin{corollary}
\label{co:e3ds}
Let $\epsilon > 0$, and $A$ be a finite set of quaternions. Then
\[  E^+_3(A) \ll_\epsilon |A|^{3 + \epsilon} d_*(A)^{1 + \epsilon}. \]
\end{corollary}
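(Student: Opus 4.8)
The plan is to follow exactly the same pattern as the proofs of Corollaries~\ref{co:e1.5} and~\ref{co:eds}: split the sum defining $E_3^+(A)$ according to whether a difference $x$ has few or many representations, bound the low-multiplicity part trivially and the high-multiplicity part via a dyadic decomposition together with Lemma~\ref{le:stsets}, and then optimize the threshold parameter $\Delta$.

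First I would write $E_3^+(A) = \sum_x \delta_{A-A}(x)^3 = \sum_{x\,:\,\delta_{A-A}(x) < \Delta} \delta_{A-A}(x)^3 + \sum_{x\,:\,\delta_{A-A}(x) \geq \Delta} \delta_{A-A}(x)^3$ for a parameter $\Delta$ to be chosen. For the first sum, since $\sum_x \delta_{A-A}(x) = |A|^2$ and each term is at most $\Delta^2$, we get the bound $\sum_{x\,:\,\delta_{A-A}(x) < \Delta} \delta_{A-A}(x)^3 \leq \Delta^2 |A|^2$. For the high-multiplicity part, apply Lemma~\ref{le:stsets} with $B = A$ and $\epsilon' = 3\epsilon/2$ over dyadic ranges $\Delta 2^{j-1} \leq \delta_{A-A}(x) < \Delta 2^j$ for $1 \leq j \leq \lceil \log |A| \rceil + 1$; the number of such $x$ is $\ll_{\epsilon'} |A|^{3+\epsilon'} d_*(A)^{1+\epsilon'} / (\Delta 2^{j-1})^3$, and each contributes at most $(\Delta 2^j)^3$, so the $j$-th block contributes $\ll_{\epsilon'} |A|^{3+\epsilon'} d_*(A)^{1+\epsilon'}$, up to an absolute constant factor from the exponential sum. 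Summing over $j$ costs only a $\log |A|$ factor, which is absorbed by the $\lesssim$ notation, giving $\sum_{x\,:\,\delta_{A-A}(x) \geq \Delta} \delta_{A-A}(x)^3 \lesssim_\epsilon |A|^{3+\epsilon} d_*(A)^{1+\epsilon}$, independent of $\Delta$.

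Combining the two parts, $E_3^+(A) \lesssim_\epsilon \Delta^2 |A|^2 + |A|^{3+\epsilon} d_*(A)^{1+\epsilon}$. Choosing $\Delta$ so that $\Delta^2 |A|^2 \approx |A|^{3+\epsilon} d_*(A)^{1+\epsilon}$, i.e. $\Delta = \bigl(|A|^{1+\epsilon} d_*(A)^{1+\epsilon}\bigr)^{1/2}$ (adjusting $\epsilon$ slightly), yields $E_3^+(A) \lesssim_\epsilon |A|^{3+\epsilon} d_*(A)^{1+\epsilon}$, as claimed. One should note, as in the earlier corollaries, that $\Delta \geq 1$ is not an issue since we may assume $d_*(A) \geq 1$ (and $|A|$ large), so the dyadic ranges are meaningful.

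There is essentially no main obstacle here; the proof is a routine instance of the ``dyadic pigeonholing plus level-set bound'' technique already deployed twice above, and the only thing requiring a moment's care is keeping track of which $\epsilon$ is being used at each stage (renaming $\epsilon'=3\epsilon/2$ before invoking Lemma~\ref{le:stsets}, so that the accumulated exponents come out to $1+\epsilon$ in the final bound) and verifying that the geometric sum over $j$ in the high-multiplicity part does not introduce any $\Delta$-dependence — which it does not, precisely because the exponent on $\delta_{A-A}(x)$ in $E_3^+$ matches the exponent on $\tau$ in the denominator of Lemma~\ref{le:stsets} up to the factor that makes the dyadic sum $j$-independent.
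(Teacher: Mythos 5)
Your proof is correct and follows essentially the same route as the paper: a dyadic decomposition of the level sets of $\delta_{A-A}$ combined with Lemma~\ref{le:stsets}; the paper simply omits the threshold $\Delta$ altogether (equivalently takes $\Delta=1$), since, as you yourself observe, the high-multiplicity estimate is $\Delta$-independent and already gives the full bound, so the low-multiplicity term and the optimization of $\Delta$ are superfluous. The only bookkeeping slip is the choice $\epsilon'=3\epsilon/2$ carried over from Corollary~\ref{co:e1.5}, which here would produce exponents $3+3\epsilon/2$ and $1+3\epsilon/2$; one should instead invoke Lemma~\ref{le:stsets} with $\epsilon'\leq\epsilon$ (say $\epsilon'=\epsilon$), which is harmless since $\epsilon>0$ is arbitrary.
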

\begin{proof}
\begin{align*}
E_3^+(A) & = \sum_{x} \delta_{A-A}(x)^3  = \sum_{j = 1}^{\lceil \log |A| \rceil + 1} \sum_{x\,:\, 2^{j-1}
\leq \delta_{A-A}(x) <  2^{j}} \delta_{A-A}(x)^3\\
& \ll_{\epsilon'} \sum_{j = 1}^{\lceil \log |A| \rceil + 1} \frac{|A|^{3 + \epsilon'}d_*(A)^{1 + \epsilon'}}{(2^{j-1})^3}(2^j)^3
\ll |A|^{3 + 2\epsilon'}d_*(A)^{1 + 2\epsilon'}.
\end{align*}
\end{proof}

%%%%%%%%%%%%%%%%%%%%%%%%%%%%%%%%%%%%%%%%%%%%%%%%%%%%%
%%%%%%%%%%%%%%%%%%%%%%%%%%%%%%%%%%%%%%%%%%%%%%%%%%%%%
We now give the main result of this section. The following theorem will be used in Section~\ref{sec:small-k} when the multiplicative energy is small.
\begin{theorem}
\label{th:smallk}
Let $\epsilon > 0$, and $A$ be a finite set of quaternions. Then \[ |A+A| \gg_{\epsilon} \frac{|A|^{14/9 - \epsilon}}{d_*(A)^{5/9}}. \]
\end{theorem}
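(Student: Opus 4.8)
The plan is to combine the energy estimates established earlier in this section so as to eliminate the intermediate quantities $E^+(A)$, $E^+_{1.5}(A)$, $E^+_3(A)$, and $E^+(A,A+B)$, ultimately expressing everything in terms of $|A+A|$ and $d_*(A)$. I would take $B = A$ throughout. The starting point is the Cauchy--Schwarz bound $E^+(A) \ge |A|^4/|A+A|$ from~\eqref{eq:additivecs}, so a lower bound on $|A+A|$ follows from an \emph{upper} bound on $E^+(A)$ in terms of $d_*(A)$ and $E^+(A)$ itself (which can then be absorbed). Concretely, Corollary~\ref{co:e1.5} gives $E^+(A) \lesssim_\epsilon |A|^{1+\epsilon} d_*(A)^{1/3+\epsilon} E^+_{1.5}(A)^{2/3}$, so the crux is to bound $E^+_{1.5}(A)$ from above.

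For that I would use Lemma~\ref{le:e1.5e3} with $B = A$, which reads $|A|^2 E^+_{1.5}(A)^2 \le E^+_3(A) \, E^+(A, A+A)$. The factor $E^+_3(A)$ is controlled by Corollary~\ref{co:e3ds}: $E^+_3(A) \lesssim_\epsilon |A|^{3+\epsilon} d_*(A)^{1+\epsilon}$. The factor $E^+(A, A+A)$ is controlled by Corollary~\ref{co:eds} with the second set taken to be $A+A$: $E^+(A, A+A) \lesssim_\epsilon |A| \, |A+A|^{3/2+\epsilon} d_*(A)^{1/2+\epsilon}$. Substituting both into Lemma~\ref{le:e1.5e3} yields
\[
E^+_{1.5}(A)^2 \lesssim_\epsilon |A|^{2+\epsilon}\, |A+A|^{3/2+\epsilon}\, d_*(A)^{3/2+\epsilon},
\]
hence $E^+_{1.5}(A) \lesssim_\epsilon |A|^{1+\epsilon} |A+A|^{3/4+\epsilon} d_*(A)^{3/4+\epsilon}$. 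Feeding this into Corollary~\ref{co:e1.5} gives
\[
E^+(A) \lesssim_\epsilon |A|^{1+\epsilon} d_*(A)^{1/3+\epsilon} \left( |A|^{1+\epsilon}|A+A|^{3/4+\epsilon} d_*(A)^{3/4+\epsilon}\right)^{2/3} = |A|^{5/3+\epsilon}\,|A+A|^{1/2+\epsilon}\,d_*(A)^{5/6+\epsilon}.
\]

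Finally I would combine this with $E^+(A) \ge |A|^4/|A+A|$ to obtain $|A|^4/|A+A| \lesssim_\epsilon |A|^{5/3+\epsilon}|A+A|^{1/2+\epsilon} d_*(A)^{5/6+\epsilon}$, i.e. $|A+A|^{3/2+\epsilon} \gtrsim_\epsilon |A|^{7/3-\epsilon} d_*(A)^{-5/6-\epsilon}$, which rearranges to $|A+A| \gtrsim_\epsilon |A|^{14/9-\epsilon} d_*(A)^{-5/9-\epsilon}$ after adjusting $\epsilon$. The only real bookkeeping obstacle is tracking the $\epsilon$'s and the polylogarithmic factors hidden in $\lesssim$ through the chain of substitutions, and making sure the exponents of $d_*(A)$ and $|A+A|$ add up correctly; there is no genuine mathematical difficulty beyond correctly assembling the four cited results. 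One should double-check that Corollary~\ref{co:eds} is being applied in the form with the first argument $A$ (so that $d_*(A)$, not $d_*(A+A)$, appears), which is exactly what its statement allows.
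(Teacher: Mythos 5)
Your proposal is correct and is essentially the paper's own argument: it chains \eqref{eq:additivecs}, Corollary~\ref{co:e1.5}, Lemma~\ref{le:e1.5e3} (with $B=A$), Corollary~\ref{co:eds} (with second set $A+A$), and Corollary~\ref{co:e3ds} in the same way, and your inequality $|A+A|^{3/2+\epsilon}\gtrsim |A|^{7/3-\epsilon}d_*(A)^{-5/6-\epsilon}$ is just the cube root of the paper's $|A+A|^{9/2+\epsilon'}\gtrsim |A|^{7-4\epsilon'}d_*(A)^{-5/2-5\epsilon'}$. The final $\epsilon$-adjustment you allude to is carried out in the paper via the observations $|A+A|\le|A|^2$ and $d_*(A)\le|A|$, which is exactly the bookkeeping you would need.
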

\begin{proof}
The proof combines Inequality~\eqref{eq:additivecs}, Corollary~\ref{co:e1.5}, Lemma~\ref{le:e1.5e3}, Corollary~\ref{co:eds}, and Corollary~\ref{co:e3ds} (in the specified order). This gives, for $\epsilon' = 9\epsilon/22$,
\begin{align*}
\frac{|A|^{12}}{|A+A|^3} &\leq E^+(A)^3 \\
&\ll_{\epsilon'} |A|^{3 + 3\epsilon'}d_*(A)^{1 + 3\epsilon'}E^+_{1.5}(A)^2\\
&\leq |A|^{1 + 3\epsilon'}d_*(A)^{1 + 3\epsilon'}E_3^+(A)E^+(A, A+A)\\
&\ll_{\epsilon'} |A|^{2 + 3\epsilon'}d_*(A)^{3/2 + 4\epsilon'}E_3^+(A)|A+A|^{3/2 + \epsilon'}\\
&\ll_{\epsilon'} |A|^{5 + 4\epsilon'}d_*(A)^{5/2 + 5\epsilon'}|A+A|^{3/2 + \epsilon'}.
\end{align*}
This implies \[ |A+A|^{9/2 + \epsilon'} \gg_{\epsilon'} \frac{|A|^{7 - 4\epsilon'}}{d_*(A)^{5/2 + 5\epsilon'}}. \]
The theorem now follows by noting that $|A+A| \leq |A|^2$ and $d_*(A) \leq |A|$.
\end{proof}

%%%%%%%%%%%%%%%%%%%%%%%%%%%%%%%%%%%%%%%%%%%%%%%%%%%%%
%% Section: Main Theorem
%%%%%%%%%%%%%%%%%%%%%%%%%%%%%%%%%%%%%%%%%%%%%%%%%%%%%

\section{Main Theorem}
\label{sec:proof}

We are finally ready to prove Theorem~\ref{th:main}. The section is organized as follows:
In Section~\ref{sec:initial}, we use standard arguments to find a large subset of $A$ with properties that will be convenient to work with. Section~\ref{sec:small-k} will deal with the case when the additive energy of this subset is large, and Section~\ref{sec:large-k} will deal with the case when the additive energy is small.

\subsection{Initial setup and pigeonholing}
\label{sec:initial}

Let $A$ be a finite set of quaternions. Since we are not concerned about constants, it suffices to prove~\eqref{eq:main} for a subset $A' \subseteq A$ containing a positive proportion of elements of $A$. We view a quaternion $a = w + xi + yj + zk$ as a vector $(w, x, y, z) \in \RR^4$, and say that $w$ and $(x, y, z)$ are, respectively, the {\em real} and {\em imaginary} parts of $a$. By a simple pigeonholing argument, there exists a set $A'$ of size at least $|A|/16$ such that all elements of $A'$ lie in the same orthant of $\RR^4$. We assume that $0 \notin A'$, and that each element of $A'$ has non-negative real part. The latter is without loss of generality since multiplication by reals is commutative and, hence, doesn't affect the sizes of the sum and product sets. To simplify notation, we identify $A'$ with $A$, and assume that $A$ satisfies the properties that we need.

To estimate the multiplicative energy, we will consider a subset of $A/A$, which we build incrementally. Let $R_0 = \{\lambda \in A/A: \delta_{A^{-1}A}(\lambda) \geq \delta_{AA^{-1}}(\lambda)\}$, and  assume, without loss of generality, that
\[ \sum_{\lambda \in R_0} \delta_{A^{-1}A}(\lambda)\delta_{AA^{-1}}(\lambda) \geq \sum_{\lambda \notin R_0} \delta_{A^{-1}A}(\lambda)\delta_{AA^{-1}}(\lambda). \]
This gives \[ E^*(A) = \sum_{\lambda \in A/A} \delta_{A^{-1}A}(\lambda)\delta_{AA^{-1}}(\lambda) \leq 2  \sum_{\lambda \in R_0} \delta_{A^{-1}A}(\lambda)\delta_{AA^{-1}}(\lambda) \leq 2 \sum_{\lambda \in R_0} \delta_{A^{-1}A}(\lambda)^2. \]
Next, we restrict our attention to $\lambda \in R_0$ with $\delta_{A^{-1}A}(\lambda) \geq E^*(A)/4|A|^2$. Let $R_1 = \{\lambda \in R_0: \delta_{A^{-1}A}(\lambda) \geq E^*(A)/4|A|^2\}$. This is possible because ratios in~$R_0 \setminus R_1$ can not account for too much the multiplicative energy, i.e.,
\[ \sum_{\lambda \in R_0 \setminus R_1} \delta_{A^{-1}A}(\lambda)^2 < \frac{E^*(A)}{4|A|^2} \sum_{\lambda \in R_0\setminus R_1} \delta_{A^{-1}A}(\lambda) \leq \frac{E^*(A)}{4} \leq \frac{1}{2}\sum_{\lambda \in R_0} \delta_{A^{-1}A}(\lambda)^2. \]
This implies that ratios in $R_1$ give the following bound on the multiplicative energy: \begin{equation}\label{eq:multiplicative-1} E^*(A) \ll \sum_{\lambda \in R_1} \delta_{A^{-1}A}(\lambda)^2. \end{equation}
For a positive integer $\tau$, let $S'_\tau$ be the set $ S'_\tau = \{\lambda \in R_1: \tau \leq \delta_{A^{-1}A}(\lambda) < 2\tau \}. $
Since $1 \leq  \delta_{A^{-1}A}(\lambda) \leq |A|$, a dyadic decomposition of the summation in~\eqref{eq:multiplicative-1} gives
\[  \sum_{\lambda \in R_1} \delta_{A^{-1}A}(\lambda)^2 = \sum_{i = 0}^{\lceil \log |A| \rceil} \sum_{\lambda \in S'_{2^i}} \delta_{A^{-1}A}(\lambda)^2. \]
It follows, by the pigeonhole principle, that there exists a $\tau \geq E^*(A)/4|A|^2$ such that elements of $S'_\tau$ contribute at least $1/\lceil \log |A|\rceil$ to the sum. This implies \[ E^*(A) \lesssim \sum_{\lambda \in S'_\tau} \delta_{A^{-1}A}(\lambda)^2 \ll |S'_\tau|\tau^2. \]
Finally, for each $\lambda \in S'_\tau$, consider the set $A_\lambda := A \cap A \lambda$. Then $|A_\lambda| = \delta_{A^{-1}A}(\lambda)$, which implies that $\tau^2 \leq E^+(A_\lambda) \leq 8\tau^3$.
A dyadic decomposition of this interval, along with the pigeonhole principle, implies that there exists $K \geq 1$ such that the set $S_\tau = \{ \lambda \in S'_\tau: \tau^3/2K \leq E^+(A_\lambda) < \tau^3/K\} $ has cardinality $|S_\tau| \gtrsim  |S'_\tau|$. 

To sum up, for an integer
\begin{equation}
\label{eq:taubound}
\tau \gg E^*(A)/|A|^2,
\end{equation}
we have found a set $S_\tau \subseteq A/A$ such that
\begin{equation}
\label{eq:energybound}
E^*(A) \lesssim |S_\tau|\tau^2,
\end{equation}
and, for every $\lambda \in S_\tau$, we have \[ \delta_{A^{-1}A}(\lambda) \geq \delta_{AA^{-1}}(\lambda),\,\,  |A_\lambda| = \delta_{A^{-1}A}(\lambda) \approx \tau, \mbox{ and } E^+(A_\lambda) \approx \tau^3/K \mbox{ for some } K \geq 1. \]

%%%%%%%%%%%%%%%%%%%%%%%%%%%%%%%%%%%%%%%%%%%%%%%%%%%%%
%% Section: Small k
%%%%%%%%%%%%%%%%%%%%%%%%%%%%%%%%%%%%%%%%%%%%%%%%%%%%%

\subsection{The case when $K$ is small} 
\label{sec:small-k}

We will show that there exists a {\em large} set $A' \subseteq A$ such that $d_*(A')$ is {\em small}. The result will then follow by using Theorem~\ref{th:smallk}. 

Recall that for $\lambda \in A/A$, $A_\lambda = A \cap A \lambda \subseteq A$. We will require the {\em Katz-Koester} inclusion, i.e., for any $\lambda \in A/A$,  $A_\lambda A_\lambda \subseteq (AA) \cap (AA) \lambda$. Clearly $A_\lambda A_\lambda \subseteq (AA)$, so it suffices to show $A_\lambda A_\lambda \subseteq (AA) \lambda $. To see this, consider $a = b \cdot c \in A_\lambda A_\lambda$. Since $c \in A_\lambda$, there exists $c' \in A$ such that $c =  c' \lambda$, that is, $c' = c \lambda^{-1} \in A$. Now we may write $a =  b \cdot c = b \cdot c \cdot \lambda^{-1} \cdot \lambda = b \cdot c' \lambda \in (AA)\lambda$.

Note that \[ \sum_{a \in A} |A \cap  a S_\tau| = \sum_{\lambda \in S_\tau} |A \cap A \lambda| \geq |S_\tau|\tau. \]
Then, by the pigeonhole principle, there exists $a \in A$ such that $A' := A \cap a S_\tau $ has size at least $|S_\tau|\tau/|A|$. 
Observe that Lemma~\ref{le:additive} implies that, for all $\lambda \in S_\tau$,
\[ |A_\lambda A_\lambda| \gg_{\epsilon'} \frac{E^+(A_\lambda)^4}{|A_\lambda|^{10 + \epsilon'}} \approx \frac{\tau^{2 - \epsilon'}}{K^4}. \]
Now, for  $b \in A'$ we can write $b = a\lambda $ for some $\lambda \in S_\tau$. This implies 
\[ | (AA) \cap (AAa^{-1})b | = | (AA) \cap (AA) \lambda  | \geq |A_\lambda A_\lambda| \gg_{\epsilon'} \frac{\tau^{2 - \epsilon'}}{K^4}. \]
Since this holds for any $b \in A'$, we get
\[ d_*(A') \ll_{\epsilon'} \frac{|AA|^4}{|A'|(\tau^{2 - \epsilon'}/K^4)^3} = \frac{|AA|^4 K^{12}}{|A'| \tau^{6 - 3\epsilon'}}, \]
by letting $Q = AA$, $R = AAa^{-1}$, and $t = \tau^{2 - \epsilon'}/K^4$ in the definition of $d_*(A')$.

Combining this with Theorem~\ref{th:smallk} gives
\[ |A+A|^9 \gg_{\epsilon'} \frac{|A'|^{14 - \epsilon'}}{d_*(A')^{5}} \gg_{\epsilon'} \frac{|A'|^{19 - \epsilon'} \tau^{30 - 15\epsilon'}}{K^{60} |AA|^{20}}. \]
We then apply Equations~\eqref{eq:energybound},~\eqref{eq:taubound},~and~\eqref{eq:multiplicativecs}, in the specified order, to get 
\begin{align*}
K^{60} |AA|^{20} |A+A|^9 &\gg_{\epsilon'} |A'|^{19 - \epsilon'} \tau^{30 - 15\epsilon'} \geq \left(\frac{|S_\tau| \tau}{|A|} \right)^{19 - \epsilon'} \cdot \tau^{30 - 15\epsilon'} \\
& = \left(|S_\tau| \tau^{2}\right)^{19 - \epsilon'} \cdot \frac{ \tau^{11 - 14\epsilon'
}}{|A|^{19 - \epsilon'}}  \gtrsim E^*(A)^{19 - \epsilon'}  \cdot \frac{\tau^{11  - 14\epsilon'
}}{|A|^{19 - \epsilon'}} \\
& = \left(\frac{E^*(A)}{|A|} \right)^{19 - \epsilon'}\cdot \tau^{11 - 14\epsilon'} \gg \left(\frac{E^*(A)}{|A|} \right)^{19 - \epsilon'} \cdot  \left(\frac{E^*(A)}{|A|^2}\right)^{11 - 14\epsilon'}\\
& = \frac{E^*(A)^{30- 15\epsilon'} }{|A|^{41 -29\epsilon'}} \geq \frac{1}{|A|^{41 -29\epsilon'}} \cdot  \left(\frac{|A|^4}{|AA|}\right)^{30- 15\epsilon'}\\
& = \frac{|A|^{79 - 31\epsilon'}}{|AA|^{30- 15\epsilon'}}.
\end{align*}
That is, we have
\[ |AA|^{50} |A+A|^{9} \gtrsim_\epsilon \frac{|A|^{79 - \epsilon}}{K^{60}}.  \]

%By choosing $K$ sufficiently small depending on $|A|$, this implies that $\max\{|AA|, |A+A|\} \geq |A|^{79/59 - \delta} > |A|^{4/3 }$, where $\delta$ is a small constant depending on $K$.

When $K \leq |A|^{\delta}$ for some fixed $\delta > 0$, this implies $|A + A| + |AA| > |A|^{4/3+c}$ where $c > 0$ is an absolute constant.

%%%%%%%%%%%%%%%%%%%%%%%%%%%%%%%%%%%%%%%%%%%%%%%%%%%%%
%% Section: Large k
%%%%%%%%%%%%%%%%%%%%%%%%%%%%%%%%%%%%%%%%%%%%%%%%%%%%%

\subsection{The case when $K$ is large}
\label{sec:large-k}

Now we handle the case that $K > |A|^\delta$, where $\delta$ is the parameter chosen at the end of the previous section.
For convenience of notation, we label the slopes in $S_\tau$ by distinct integers, and let $A_i = A_{\lambda_i} = A \cap A{\lambda_i}$ for each $\lambda_i \in S_\tau$.

A key observation, first exploited by Solymosi in the real case \cite{solymosi08} and by Solymosi and Wong \cite{sw17} for quaternions, is that, for distinct $\lambda_i, \lambda_j \in S_\tau$ and $a_1,a_2 \in A_i, b_1,b_2 \in A_j$ with $(a_1,b_1) \neq (a_2,b_2)$, we have $(a_1,  a_1\lambda_i) + (b_1, b_1 \lambda_j) \neq (a_2, a_2 \lambda_i) + (b_2, b_2 \lambda_j)$.
Indeed, since $\lambda_i \neq \lambda_j$, we have that $(\lambda_i - \lambda_j)$ is invertible.
Hence, given $x,y \in \mathbb{H}$, we can solve the equations
\[a + b =  x, \qquad  a \lambda_i +  b \lambda_j = y   \]
uniquely for $a,b$.

For pairs of distinct elements $\{\lambda_i, \lambda_j\}, \{\lambda_k, \lambda_\ell\} \in \binom{S_\tau}{2}$, we say that $\{\lambda_i, \lambda_j\}$ {\em conflicts} with $\{\lambda_k, \lambda_\ell\}$ if $\{\lambda_i, \lambda_j\} \neq \{\lambda_k, \lambda_\ell\}$ and 
$$((A_{i} + A_{j}) \times ( A_{i}\lambda_i + A_{j} \lambda_j)) \cap ((A_{k} + A_{l}) \times ( A_{k} \lambda_k +  A_{l} \lambda_l)) \neq \emptyset.$$

In order to find a large set of distinct elements of $(A+A)\times(A+A)$, we will consider sums $((a, a\lambda_i) + (b, b\lambda_j)) \in ((A_i,  A_i \lambda_i) + (A_j,  A_j \lambda_j))$ for edges $\{\lambda_i, \lambda_j\}$ in a carefully selected graph $G \subset \binom{S_\tau}{2}$.
We construct $G$ so that few pairs of edges conflict.
For each pair of edges that does conflict, we apply an observation of Konyagin and Shkredov \cite{ks15} (adapted from the real to quaternionic case) to bound the number of elements of $(A+A) \times (A+A)$ that we count more than once.

First we give the observation of Konyagin and Shkredov.
Using the assumption on the additive energy of each $A_i$, we can bound the number of repeated sums coming from each pair of conflicting edges.
\begin{lemma}\label{lem:intersectionSize}
	Let $\{\lambda_i, \lambda_j\}$ , $\{\lambda_k, \lambda_\ell\}$ be a pair of conflicting edges.
	Then,
	\begin{equation}\label{eq:conflictIntersection}
	|((A_{i} + A_{j}) \times ( A_{i} \lambda_i + A_{j} \lambda_j)) \cap ((A_{k} + A_{\ell}) \times ( A_{k}\lambda_k + A_{\ell}  \lambda_\ell ))| \lesssim \tau^2 K^{-1/2}.\end{equation}
\end{lemma}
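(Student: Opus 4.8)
The plan is to bound the size of the intersection in \eqref{eq:conflictIntersection} by estimating, separately, the number of possible first coordinates and the number of possible second coordinates of a point lying in both product sets, and then taking the smaller of the two bounds (in fact, the geometric mean / a balanced combination). Since the two edges $\{\lambda_i,\lambda_j\}$ and $\{\lambda_k,\lambda_\ell\}$ are distinct but conflict, they share at most one endpoint; I will treat the generic case where they share no endpoint (the case of a shared endpoint is similar and easier, since then one of the two coordinate-sets is literally the same on both sides). A point in the intersection has first coordinate in $(A_i+A_j)\cap(A_k+A_\ell)$ and second coordinate in $(A_i\lambda_i+A_j\lambda_j)\cap(A_k\lambda_k+A_\ell\lambda_\ell)$, so the intersection size is at most
\[
|(A_i+A_j)\cap(A_k+A_\ell)|\cdot|(A_i\lambda_i+A_j\lambda_j)\cap(A_k\lambda_k+A_\ell\lambda_\ell)|,
\]
but this crude product is too weak; instead I will show each factor is small \emph{in a weighted sense} and combine via Cauchy–Schwarz.

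The key step is the following energy estimate. For a single edge $\{\lambda_i,\lambda_j\}$, the number of representations of an element $x$ as $a+b$ with $a\in A_i$, $b\in A_j$, summed in squares, is exactly $E^+(A_i,A_j)\le E^+(A_i)^{1/2}E^+(A_j)^{1/2}\approx (\tau^3/K)$, using the defining property $E^+(A_\lambda)\approx\tau^3/K$ from the setup in Section~\ref{sec:initial} together with the Cauchy–Schwarz bound $E^+(A_i,A_j)^2\le E^+(A_i)E^+(A_j)$. Now suppose $(x,y)$ lies in the intersection. Then $x$ has at least one representation $a+b$ with $(a,b)\in A_i\times A_j$, and since the map $(a,b)\mapsto(a+b,a\lambda_i+b\lambda_j)$ is injective (by the invertibility of $\lambda_i-\lambda_j$, as recalled just before the lemma), the pair $(x,y)$ determines $(a,b)$ uniquely; likewise $(x,y)$ determines a unique $(c,d)\in A_k\times A_\ell$. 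So counting points $(x,y)$ in the intersection is the same as counting quadruples. I would phrase the count as: the number of such $(x,y)$ equals
\[
\bigl|\{(a,b,c,d)\in A_i\times A_j\times A_k\times A_\ell : a+b=c+d,\ a\lambda_i+b\lambda_j=c\lambda_k+d\lambda_\ell\}\bigr|.
\]
Drop the second equation; we must count solutions of $a+b=c+d$, which is a "mixed" additive energy, bounded by Cauchy–Schwarz and the individual energies by
\[
\sum_{x}\delta_{A_i-(-A_j)}(x)\,\delta_{A_k-(-A_\ell)}(x)\le E^+(A_i,A_j)^{1/2}E^+(A_k,A_\ell)^{1/2}\approx \tau^3/K.
\]
That alone gives only $\tau^3/K$, which is worse than the claimed $\tau^2K^{-1/2}$ when $K<\tau^2$. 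To sharpen it I bring in the second equation: the injectivity observation shows that for each fixed value of $x$, the number of \emph{valid} pairs $(a,b)$ with $a+b=x$ that can be completed to a point of the intersection is at most $\min(\delta_{A_i+A_j}(x),\ \delta_{A_i\lambda_i+A_j\lambda_j}(\cdot))$ — but more usefully, fixing $(a,b)$ already fixes $y$, and then $(c,d)$ must satisfy both $c+d=x$ and $c\lambda_k+d\lambda_\ell=y$, a system with a \emph{unique} solution; so actually the count is at most $\min_{\text{pairing}}\sum_x \delta(x)\cdot 1$ over the smaller side. The cleanest route: the intersection size is at most $\min\bigl(N_1,N_2\bigr)$ where $N_1=\sum_x \delta_{A_i+A_j}(x)\mathbf 1[x\in A_k+A_\ell]$ counts choices of $(a,b)$ (each determining everything), and symmetrically $N_2$ counts choices of $(c,d)$; and $N_1\le \sum_x\delta_{A_i+A_j}(x)\delta_{A_k+A_\ell}(x)^{0}$... this needs the truncation trick. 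Use instead $N_1\le \bigl(\sum_x\delta_{A_i+A_j}(x)^2\bigr)^{1/2}|A_k+A_\ell|^{1/2}\le (\tau^3/K)^{1/2}\cdot\tau$ — wait, $|A_k+A_\ell|$ can be as large as $\tau^2$, giving $\tau^2 K^{-1/2}$. That is exactly the claimed bound.

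So the argument is: the intersection injects into the set of pairs $(a,b)\in A_i\times A_j$ with $a+b\in A_k+A_\ell$; its size is at most $\sum_{x\in A_k+A_\ell}\delta_{A_i+A_j}(x)\le |A_k+A_\ell|^{1/2}\bigl(\sum_x\delta_{A_i+A_j}(x)^2\bigr)^{1/2}=|A_k+A_\ell|^{1/2}E^+(A_i,A_j)^{1/2}\lesssim \tau\cdot(\tau^3/K)^{1/2}=\tau^{5/2}K^{-1/2}$. Hmm, that overshoots by $\tau^{1/2}$; I expect the correct final pinch uses $|A_k+A_\ell|\le 4\tau^2$ is too lossy and one should instead bound $|A_k+A_\ell|$ by... no — re-examining, one should use the \emph{second} coordinate to get the extra saving: bound the intersection by $\bigl(\sum_x\delta_{A_i\lambda_i+A_j\lambda_j}(x)^2\bigr)^{1/2}|A_k\lambda_k+A_\ell\lambda_\ell|^{1/2}$... same issue. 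The genuine mechanism must be: the intersection injects \emph{both} into pairs $(a,b)$ and into pairs $(c,d)$, hence into the \emph{common refinement}, and a Cauchy–Schwarz across the two equations simultaneously yields $\bigl(E^+(A_i,A_j)E^+(A_k,A_\ell)\bigr)^{1/2}/(\text{something})$; concretely the count of quadruples solving \emph{one} linear equation is $\le (\tau^3/K)$, and among those, the second equation $a\lambda_i+b\lambda_j=c\lambda_k+d\lambda_\ell$ is satisfied by a proportion $\le$ (number of collisions)/$\tau^3$...

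\textbf{The main obstacle}, as the scratch-work above shows, is getting the exponent of $\tau$ down from the naive $\tau^{5/2}K^{-1/2}$ (or $\tau^3/K$) to exactly $\tau^2K^{-1/2}$. The resolution — which is the heart of Konyagin–Shkredov's observation — is that one must \emph{not} bound $|A_k+A_\ell|$ trivially by $\tau^2$, but rather use the defining low-energy property a second time: by Cauchy–Schwarz, $|A_k+A_\ell|\ge |A_k|^2|A_\ell|^2/E^+(A_k,A_\ell)\approx \tau^4/(\tau^3/K)=\tau K$, and crucially one instead re-runs the estimate pairing a \emph{sum}-side delta-function with a \emph{difference} arising from the second equation, so that both equations contribute a factor $E^+(\cdot)^{1/2}\approx(\tau^3/K)^{1/2}$ while the "free" combinatorial factor is only $\tau$ (the size $|A_i|\approx\tau$, not $|A_i+A_j|\approx\tau^2$). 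I would carry this out by introducing, for the conflicting pair, the bilinear form counting quadruples with $a+b=c+d$, noting each such quadruple already forces $a\lambda_i+b\lambda_j$ and $c\lambda_k+d\lambda_\ell$ to lie in a set of size $\le\tau$ worth of choices once $a$ (resp. $c$) is fixed, and balancing. I expect roughly two or three applications of Cauchy–Schwarz and Hölder, exactly as in the three corollaries of Section~\ref{sec:energies}, after which $\tau^2K^{-1/2}$ drops out; the suppressed logarithmic factors are absorbed into $\lesssim$.
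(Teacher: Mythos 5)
There is a genuine gap: your write-up correctly reduces the problem to counting quadruples $(a_i,a_j,a_k,a_\ell)\in A_i\times A_j\times A_k\times A_\ell$ satisfying the two coordinate equations $a_i+a_j=a_k+a_\ell$ and $a_i\lambda_i+a_j\lambda_j=a_k\lambda_k+a_\ell\lambda_\ell$, and you correctly diagnose that every route you try (dropping one equation, or pairing one energy factor with $|A_k+A_\ell|^{1/2}\le\tau$) stalls at $\tau^3/K$ or $\tau^{5/2}K^{-1/2}$. But the proposal never actually produces the claimed $\tau^2K^{-1/2}$; the final paragraph is a description of what you ``expect'' to work, and the mechanism you guess there does not close the gap. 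In particular, the lower bound $|A_k+A_\ell|\gtrsim\tau K$ is of no use for an upper bound on the count, and the heuristic ``both equations contribute $E^+(\cdot)^{1/2}\approx(\tau^3/K)^{1/2}$ while the free factor is only $\tau$'' would give $\tau\cdot\tau^3/K$, which is not the target; the correct shape of the bound is $\tau^{1/2}\cdot(\tau^3/K)^{1/2}$, i.e.\ \emph{one} energy factor and a free factor $|A_k|^{1/2}$, not $|A_k+A_\ell|^{1/2}$.

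The missing idea is variable elimination rather than a cleverer weighting of the two equations separately. Assume (after relabeling, using that the two edges are distinct) that $\lambda_\ell\notin\{\lambda_i,\lambda_j\}$, right-multiply the first equation by $\lambda_\ell$ and subtract it from the second; since $a_\ell$ appears with the same right factor $\lambda_\ell$ in both, it cancels, leaving the three-variable equation
\begin{equation*}
a_i(\lambda_i-\lambda_\ell)+a_j(\lambda_j-\lambda_\ell)=a_k(\lambda_k-\lambda_\ell).
\end{equation*}
Since a point of the intersection determines the quadruple (by the injectivity you already noted), the intersection size is at most the number $T$ of solutions $(a_i,a_j,a_k)$ of this equation. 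Writing $T=\sum_z r(z)s(z)$ with $r(z)$ the number of representations $z=a_i(\lambda_i-\lambda_\ell)+a_j(\lambda_j-\lambda_\ell)$ and $s(z)\in\{0,1\}$ the indicator that $z\in A_k(\lambda_k-\lambda_\ell)$, Cauchy--Schwarz gives $T\le |A_k|^{1/2}\,E^+\bigl(A_i(\lambda_i-\lambda_\ell),A_j(\lambda_j-\lambda_\ell)\bigr)^{1/2}$, and a second Cauchy--Schwarz together with the invariance of additive energy under right multiplication by an invertible quaternion gives $T\le |A_k|^{1/2}E^+(A_i)^{1/4}E^+(A_j)^{1/4}\approx\tau^{1/2}(\tau^3/K)^{1/2}=\tau^2K^{-1/2}$. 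This elimination step, which trades the sumset factor $|A_k+A_\ell|^{1/2}\le\tau$ for the set factor $|A_k|^{1/2}=\tau^{1/2}$, is exactly what your attempt is missing; note also that it uses the non-commutative structure carefully (all dilations act on the right), which is why the paper multiplies the first equation on the right by $\lambda_\ell$.
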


\begin{proof}
	Suppose, without loss of generality, that $\lambda_\ell$ is distinct from $\lambda_i$ and $\lambda_j$.
	Note that quadruples $(a_i, a_j, a_k, a_\ell) \in A_i \times A_j \times A_k \times A_\ell$ that contribute to (\ref{eq:conflictIntersection}) satisfy
	\begin{align}
	\label{eq:equalXCoord}a_i + a_j &= a_k + a_\ell, \\
	\label{eq:equalYCoord} a_i \lambda_i + a_j \lambda_j &= a_k \lambda_k +  a_\ell \lambda_\ell.
	\end{align}
	
	Subtracting Equation $\eqref{eq:equalXCoord}$ multiplied by $\lambda_\ell$ on the right from Equation $\eqref{eq:equalYCoord}$, we obtain
	\begin{equation}\label{eq:combinedSum}
	 a_i(\lambda_i - \lambda_\ell) + a_j(\lambda_j - \lambda_\ell) = a_k (\lambda_k - \lambda_\ell).
	\end{equation}
	
	Hence, $|((A_{i} + A_{j}) \times ( A_{i} \lambda_i + A_{j} \lambda_j)) \cap ((A_{k} + A_{\ell}) \times ( A_{k}\lambda_k + A_{\ell}  \lambda_\ell ))|$ is bounded above by the number $T$ of $(a_i,a_j,a_k) \in A_i \times A_j \times A_k$ that satisfy Equation \eqref{eq:combinedSum}.
	
	Applying Cauchy-Schwarz twice, we obtain
	\begin{align*}
	T &\leq |A_{k}|^{1/2} E^+(A_{i}(\lambda_i - \lambda_l), A_{j}(\lambda_j-\lambda_l))^{1/2},\\
	&\leq |A_{k}|^{1/2} E^+(A_{i})^{1/4} E^+(A_{j})^{1/4}.
	\end{align*}
	By construction, $|A_i| \approx \tau$ and $E^+(A_i) \approx \tau^3 K^{-1}$ for each $i$.
	Hence,
	\[T \lesssim \tau^2 K^{-1/2},\]
	as claimed.
\end{proof}
We also need to limit the number of pairs $(\lambda_i,\lambda_j),(\lambda_k, \lambda_\ell) \in G$ for which this intersection is non-empty.
This involves controlling when such intersections can occur, and choosing $G$ accordingly.

First, we establish that any ratio $(a_i + a_j)^{-1}(a_i \lambda_i + a_j \lambda_j)$ with $a_i \in A_i, a_j \in A_j$ must be close to $\lambda_i$.
This is Lemma 7 in Solymosi and Wong \cite{sw17}.

\begin{lemma}\label{lem:conflictBall}
	For any distinct $\lambda_i, \lambda_j \in S_\tau$ and $a_i \in A_i$, $a_j \in A_j$, we have
	$$\|(a_i + a_j)^{-1}(a_i \lambda_i + a_j \lambda_j) - \lambda_i\| \leq \|\lambda_j - \lambda_i\|.$$
\end{lemma}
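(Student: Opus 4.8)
\textbf{Proof proposal for Lemma~\ref{lem:conflictBall}.}
The plan is to treat each quaternion as a vector in $\RR^4$ and express the point $(a_i+a_j)^{-1}(a_i\lambda_i+a_j\lambda_j)$ as a convex combination of $\lambda_i$ and $\lambda_j$ with suitable quaternionic (not merely real) weights, then show that this forces the point into the ball of radius $\|\lambda_j-\lambda_i\|$ centered at $\lambda_i$. First I would write $c = a_i+a_j$ and observe that
\[ c^{-1}(a_i\lambda_i+a_j\lambda_j) = c^{-1}a_i\lambda_i + c^{-1}a_j\lambda_j = c^{-1}a_i\lambda_i + c^{-1}(c-a_i)\lambda_j = \lambda_j + c^{-1}a_i(\lambda_i-\lambda_j). \]
So the quantity to bound is $\|\,c^{-1}a_i(\lambda_i-\lambda_j) - (\lambda_i-\lambda_j)\,\| = \|(c^{-1}a_i - 1)(\lambda_i-\lambda_j)\|$. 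Since the quaternion norm is multiplicative, $\|(c^{-1}a_i-1)(\lambda_i-\lambda_j)\| = \|c^{-1}a_i - 1\|\cdot\|\lambda_i-\lambda_j\| = \|c^{-1}(a_i-c)\|\cdot\|\lambda_j-\lambda_i\| = \|c^{-1}a_j\|\cdot\|\lambda_j-\lambda_i\|$. Thus it suffices to prove that $\|c^{-1}a_j\| \le 1$, i.e. $\|a_j\| \le \|a_i+a_j\|$.

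The inequality $\|a_j\| \le \|a_i+a_j\|$ is exactly where the pigeonholing from Section~\ref{sec:initial} is used: recall that $A$ was chosen so that all of its elements lie in a single orthant of $\RR^4$, and in particular all have non-negative real part. Since $A_i, A_j \subseteq A$, both $a_i$ and $a_j$ lie in that orthant. For vectors $u, v$ in a common orthant of $\RR^4$ we have $\langle u, v\rangle \ge 0$ (coordinatewise the products are non-negative), hence $\|u+v\|^2 = \|u\|^2 + 2\langle u,v\rangle + \|v\|^2 \ge \|v\|^2$. Applying this with $u = a_i$, $v = a_j$ gives $\|a_i+a_j\| \ge \|a_j\|$, as needed. (One should also note $c = a_i+a_j \neq 0$ so that $c^{-1}$ makes sense: since $a_i, a_j$ are nonzero and in the same orthant, $\|a_i+a_j\| \ge \|a_j\| > 0$.)

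Assembling the pieces: combining the identity $c^{-1}(a_i\lambda_i+a_j\lambda_j) - \lambda_i = (c^{-1}a_i - 1)(\lambda_i - \lambda_j) = c^{-1}a_j(\lambda_j - \lambda_i)$ with multiplicativity of the norm and $\|c^{-1}a_j\|\le 1$ yields
\[ \|(a_i+a_j)^{-1}(a_i\lambda_i + a_j\lambda_j) - \lambda_i\| = \|c^{-1}a_j\|\cdot\|\lambda_j - \lambda_i\| \le \|\lambda_j - \lambda_i\|, \]
which is the claim. I do not anticipate a serious obstacle here; the only subtle point is making sure the orthant hypothesis is invoked correctly to get $\langle a_i, a_j\rangle \ge 0$, and keeping track of left versus right multiplication when manipulating the quaternionic expressions (the algebra above never commutes anything, so it is valid over $\HH$).
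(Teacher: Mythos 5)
Your proof is correct and is essentially the paper's own argument: both reduce the quantity to $\|(a_i+a_j)^{-1}a_j(\lambda_j-\lambda_i)\|$, use multiplicativity of the quaternion norm, and conclude with $\|a_i+a_j\|\geq\|a_j\|$ from the same-orthant property established in the initial pigeonholing. The only difference is cosmetic (you substitute $a_j = c - a_i$ where the paper uses the identity $(a_i+a_j)^{-1} = (1-(a_i+a_j)^{-1}a_j)a_i^{-1}$, and you spell out the inner-product justification for the orthant inequality, which the paper simply asserts).
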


\begin{proof}
	Recall that $a_i$ and $a_j$ are in the same orthant, so $\|a_i+a_j\| \geq \|a_j\|$.
	Using the identity $(a_i+a_j)^{-1} = (1- (a_i+a_j)^{-1}a_j)a_i^{-1}$, we have
	\begin{align*}
	\|(a_i+a_j)^{-1}(a_i \lambda_i + a_j \lambda_j) - \lambda_i\| &= \| (1-(a_i+a_j)^{-1}a_j)\lambda_i +  (a_i+a_j)^{-1}a_j \lambda_j - \lambda_i \|,\\
	&= \|(a_i + a_j)^{-1} a_j (\lambda_j - \lambda_i) \|, \\
	&= \|\lambda_j - \lambda_i\|\,\|a_j\|\,\|a_i+a_j\|^{-1}, \\
	&\leq \| \lambda_j-\lambda_i \|.
	\end{align*}
\end{proof}

Now we define $G$.
Let $M$ be a positive integer parameter that we will fix later.
Let $G$ be the graph on vertex set $S_\tau$ formed by joining each $\lambda \in S_\tau$ to its $M$ closest neighbors, breaking ties arbitrarily.
Since the degree of each vertex of $G$ is at least $M$, the number of edges in $G$ is at least $M |S_\tau|/2$.
We also need to bound the number of conflicting pairs of edges in $G$.

Let $B_i$ be the smallest closed ball centered at $\lambda_i$ that contains at least $M+1$ points of $S_\tau$ (including $\lambda_i$ itself), and let $R_i$ be the radius of $B_i$.
While a given ball $B_i$ may contain an arbitrary number of points of $S_\tau$, the interior of each ball contains at most $M$ points.

Note that, if $\{\lambda_i, \lambda_j\} \in G$, then at least one of $\lambda_i \in B_j$ or $\lambda_j \in B_i$.
In addition, from Lemma~\ref{lem:conflictBall}, we have that, for any distinct $\lambda_i, \lambda_k$, if there exist $\lambda_j \in B_i, \lambda_\ell \in B_k$ such that $\{\lambda_i,\lambda_j\}$ conflicts with $\{\lambda_k, \lambda_\ell\}$, then $B_i \cap B_k \neq \emptyset$.

For each $\lambda_i$, let $L_i$ be the set of $\lambda_j$ such that $R_j \geq R_i$ and $B_i \cap B_j \neq \emptyset$.
Given any $\lambda_i$, and $\lambda_j \in L_i$, the number of pairs of edges $\{\lambda_i, \lambda_k\}, \{\lambda_j, \lambda_\ell\}$ that conflict is bounded above by the product of the degrees of $\lambda_i$ and $\lambda_j$.
Hence, to bound the number of conflicting pairs of edges, it will suffice to place upper bounds on the degree of each vertex of $G$, and the size of each set $L_i$.
This is done in Lemmas \ref{lem:degreeBound} and \ref{lem:conflictBound}, respectively.

The proofs of both Lemma \ref{lem:degreeBound} and \ref{lem:conflictBound} both rely on similar geometric averaging arguments.
In each case, for a fixed $\lambda_i$, we bound the number of interesting $\lambda_j$ in an arbitrary cone of constant size with vertex $\lambda_i$.
By linearity of expectation, this gives an upper bound on the number of interesting $\lambda_j$ in all directions from $\lambda_i$.

The following simple geometric lemma used in both proofs.

\begin{lemma}\label{lem:vectorDiff}
	Let $\mathbf{x}, \mathbf{y}$ be arbitrary vectors in $\mathbb{R}^4$, and let $\mathbf{u}$ be a unit vector.
	Let $\mathbf{x}^\perp$ and $\mathbf{y}^\perp$ be the projections of $\mathbf{x}$ and $\mathbf{y}$ onto the subspace orthogonal to $\mathbf{u}$.
	Then,
	$$\| \mathbf{x} - \mathbf{y} \|^2 \leq \|\mathbf{x}\|^2 + \|\mathbf{y}\|^2 - 2(\mathbf{x}\cdot \mathbf{u})(\mathbf{y}\cdot \mathbf{u})+2\|\mathbf{x}^\perp\|\|\mathbf{y}^\perp\|.$$
\end{lemma}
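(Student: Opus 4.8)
The plan is to prove this as a direct consequence of expanding $\|\mathbf{x}-\mathbf{y}\|^2$ and controlling the cross term $-2\,\mathbf{x}\cdot\mathbf{y}$ by splitting each vector into its component along $\mathbf{u}$ and its component orthogonal to $\mathbf{u}$. Write $\mathbf{x} = (\mathbf{x}\cdot\mathbf{u})\mathbf{u} + \mathbf{x}^\perp$ and $\mathbf{y} = (\mathbf{y}\cdot\mathbf{u})\mathbf{u} + \mathbf{y}^\perp$, where $\mathbf{x}^\perp, \mathbf{y}^\perp$ are orthogonal to $\mathbf{u}$ by definition. Since $\mathbf{u}$ is a unit vector and $\mathbf{u}\cdot\mathbf{x}^\perp = \mathbf{u}\cdot\mathbf{y}^\perp = 0$, the inner product decomposes as $\mathbf{x}\cdot\mathbf{y} = (\mathbf{x}\cdot\mathbf{u})(\mathbf{y}\cdot\mathbf{u}) + \mathbf{x}^\perp\cdot\mathbf{y}^\perp$.

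First I would substitute this into the identity $\|\mathbf{x}-\mathbf{y}\|^2 = \|\mathbf{x}\|^2 + \|\mathbf{y}\|^2 - 2\,\mathbf{x}\cdot\mathbf{y}$ to get
\[
\|\mathbf{x}-\mathbf{y}\|^2 = \|\mathbf{x}\|^2 + \|\mathbf{y}\|^2 - 2(\mathbf{x}\cdot\mathbf{u})(\mathbf{y}\cdot\mathbf{u}) - 2\,\mathbf{x}^\perp\cdot\mathbf{y}^\perp.
\]
Then the only remaining step is to bound $-2\,\mathbf{x}^\perp\cdot\mathbf{y}^\perp$ from above by $2\|\mathbf{x}^\perp\|\|\mathbf{y}^\perp\|$, which is immediate from the Cauchy--Schwarz inequality: $-\mathbf{x}^\perp\cdot\mathbf{y}^\perp \le |\mathbf{x}^\perp\cdot\mathbf{y}^\perp| \le \|\mathbf{x}^\perp\|\|\mathbf{y}^\perp\|$. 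Plugging this in yields exactly the claimed inequality.

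There is essentially no obstacle here; the lemma is a routine computation, and the only thing to be careful about is the sign of the cross term. Note that the inequality is genuinely one-directional (not an identity) precisely because we discard the true value of $\mathbf{x}^\perp\cdot\mathbf{y}^\perp$ in favor of its absolute-value bound; this is intentional, since in the geometric averaging arguments of Lemmas~\ref{lem:degreeBound} and~\ref{lem:conflictBound} we want an upper bound on $\|\mathbf{x}-\mathbf{y}\|^2$ in which the dependence on the directions of $\mathbf{x}$ and $\mathbf{y}$ enters only through the single scalar product $(\mathbf{x}\cdot\mathbf{u})(\mathbf{y}\cdot\mathbf{u})$ along the distinguished axis $\mathbf{u}$.
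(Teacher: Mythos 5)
Your proof is correct and is essentially the same as the paper's: both rest on the orthogonal decomposition along $\mathbf{u}$, and your Cauchy--Schwarz bound $-\mathbf{x}^\perp\cdot\mathbf{y}^\perp \le \|\mathbf{x}^\perp\|\|\mathbf{y}^\perp\|$ is exactly the content of the paper's triangle-inequality step $\|\mathbf{x}^\perp-\mathbf{y}^\perp\| \le \|\mathbf{x}^\perp\|+\|\mathbf{y}^\perp\|$ after squaring. The only difference is whether one expands $\|\mathbf{x}-\mathbf{y}\|^2$ before or after decomposing, which is immaterial.
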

\begin{proof}
	\begin{align*}
	\|\mathbf{x} - \mathbf{y}\|^2
	&= \|(\mathbf{x}\cdot \mathbf{u} - \mathbf{y} \cdot \mathbf{u})\mathbf{u} + \mathbf{x}^\perp - \mathbf{y}^\perp \|^2, \\
	&= (\mathbf{x}\cdot \mathbf{u} - \mathbf{y} \cdot \mathbf{u})^2 + \|\mathbf{x}^\perp - \mathbf{y}^\perp\|^2, \\
	&\leq (\mathbf{x}\cdot \mathbf{u} - \mathbf{y} \cdot \mathbf{u})^2 + (\|\mathbf{x}^\perp\| + \|\mathbf{y}^\perp\|)^2, \\
	&= \|\mathbf{x}\|^2 + \|\mathbf{y}\|^2 - 2(\mathbf{x}\cdot \mathbf{u})(\mathbf{y}\cdot \mathbf{u})+2\|\mathbf{x}^\perp\|\|\mathbf{y}^\perp\|.
	\end{align*}
\end{proof}

\begin{lemma}\label{lem:degreeBound}
	For each $i$, the degree $\deg(\lambda_i)$ of $\lambda_i$ in $G$ is bounded by
	\[\deg(\lambda_i) \ll M.\]
\end{lemma}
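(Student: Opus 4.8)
The plan is to use the fact that a ``$M$-nearest-neighbour'' graph in a Euclidean space of fixed dimension (here, $\mathbb{R}^4$) has maximum degree $O(M)$. Each $\lambda_i$ contributes exactly $M$ edges through the neighbours it itself chooses, so it suffices to bound the number of $\lambda_j \in S_\tau$ that chose $\lambda_i$ as one of their $M$ closest neighbours; call this set $N_i$, so that $\deg(\lambda_i) \le M + |N_i|$, and it is enough to show $|N_i| \ll M$.

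First I would fix once and for all an angle $\theta$ with $\cos(2\theta) > 1/2$ (e.g.\ $\theta = \pi/7$) and a cover of the unit sphere in $\mathbb{R}^4$ by $N = N(\theta) = O(1)$ caps of angular radius $\theta$; equivalently, unit vectors $\mathbf{u}_1,\dots,\mathbf{u}_N$ such that every nonzero $\mathbf{v} \in \mathbb{R}^4$ satisfies $\mathbf{v}\cdot\mathbf{u}_s \ge \|\mathbf{v}\|\cos\theta$ for some $s$. Let $C_s$ be the cone of directions within angle $\theta$ of $\mathbf{u}_s$, translated to have apex $\lambda_i$. Since $|N_i| \le \sum_{s=1}^{N} |N_i \cap C_s|$, it suffices to prove $|N_i \cap C_s| \le M$ for each $s$, which then gives $\deg(\lambda_i) \le (N+1)M \ll M$. (One could equally phrase this as an averaging argument over a random cone, as the overview suggests, but a fixed finite cover is cleanest.)

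The key geometric claim is: if $\lambda_j,\lambda_{j'}$ both lie in a cone $C_s$ of half-angle $\theta$ with apex $\lambda_i$, and $\|\lambda_j - \lambda_i\| \le \|\lambda_{j'} - \lambda_i\|$, then $\|\lambda_j - \lambda_{j'}\| < \|\lambda_{j'} - \lambda_i\|$. To see this, apply Lemma~\ref{lem:vectorDiff} with $\mathbf{x} = \lambda_j - \lambda_i$, $\mathbf{y} = \lambda_{j'} - \lambda_i$, $\mathbf{u} = \mathbf{u}_s$. Being in the cone gives $\mathbf{x}\cdot\mathbf{u} \ge \|\mathbf{x}\|\cos\theta \ge 0$ and $\|\mathbf{x}^\perp\| \le \|\mathbf{x}\|\sin\theta$, and similarly for $\mathbf{y}$, whence
\[ \|\mathbf{x} - \mathbf{y}\|^2 \le \|\mathbf{x}\|^2 + \|\mathbf{y}\|^2 - 2\|\mathbf{x}\|\|\mathbf{y}\|\cos(2\theta); \]
since $\mathbf{x} \neq 0$ and $\cos(2\theta) > 1/2 \ge \|\mathbf{x}\|/(2\|\mathbf{y}\|)$, the right-hand side is strictly less than $\|\mathbf{y}\|^2$, and $\|\mathbf{x} - \mathbf{y}\| = \|\lambda_j - \lambda_{j'}\|$, $\|\mathbf{y}\| = \|\lambda_{j'} - \lambda_i\|$.

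Finally, suppose for contradiction that some $C_s$ contains $M+1$ elements $\lambda_{j_1},\dots,\lambda_{j_{M+1}}$ of $N_i$, ordered so that $\|\lambda_{j_1} - \lambda_i\| \le \dots \le \|\lambda_{j_{M+1}} - \lambda_i\|$, and set $\lambda_{j'} = \lambda_{j_{M+1}}$. By the key claim, each of $\lambda_{j_1},\dots,\lambda_{j_M}$ (all distinct from $\lambda_{j'}$ and from $\lambda_i$) is strictly closer to $\lambda_{j'}$ than $\lambda_i$ is; hence $\lambda_{j'}$ has at least $M$ points of $S_\tau$ strictly closer to it than $\lambda_i$, so $\lambda_i$ is not among the $M$ nearest neighbours of $\lambda_{j'}$, contradicting $\lambda_{j'} \in N_i$. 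Thus $|N_i \cap C_s| \le M$ for all $s$, completing the proof. I do not expect a substantive obstacle; the only delicate points are choosing $\theta$ small enough to make the inequality above strict, and correctly accounting for the arbitrary tie-breaking in the definition of $G$, both of which are handled above.
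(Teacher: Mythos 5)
Your proof is correct and takes essentially the same route as the paper: the identical geometric core (Lemma \ref{lem:vectorDiff} applied to two points in a cone of small half-angle with apex $\lambda_i$, showing the nearer point is strictly closer to the farther one than $\lambda_i$ is, hence at most $M$ relevant points per cone), with a fixed $O(1)$ cap cover of the sphere in place of the paper's averaging over a uniformly random cone direction. If anything, your decomposition $\deg(\lambda_i)\leq M+|N_i|$, bounding separately the neighbors that selected $\lambda_i$, is slightly more careful than the paper's wording, which derives its contradiction as though every edge at $\lambda_i$ arose from the other endpoint choosing $\lambda_i$.
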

\begin{proof}
	Take $\lambda_i$ to be at the origin.
	Let $\mathbf{u}$ be an arbitrary unit vector, and let $C$ be the cone
	$$C = \{\mathbf{x} : \mathbf{x} \cdot \mathbf{u} > (\sqrt{3}/2)\|\mathbf{x}\|\}.$$
	
	We will use the decomposition $\mathbf{x} = (\mathbf{x} \cdot \mathbf{u})\mathbf{u} + \mathbf{x}^\perp$.
	Note that, if $\mathbf{x} \in C$, then
	$$\|\mathbf{x}^\perp\|^2 = \|\mathbf{x}\|^2 - (\mathbf{x} \cdot \mathbf{u})^2 < (1/4)\|\mathbf{x}\|^2.$$
	
	Let $\mathbf{x}, \mathbf{y}$ be vectors in $C$ with $\|\mathbf{x}\|\geq\|\mathbf{y}\|$.
	By Lemma \ref{lem:vectorDiff},
	\begin{align*}
	\| \mathbf{x} - \mathbf{y} \|^2 &\leq \|\mathbf{x}\|^2 + \|\mathbf{y}\|^2 - 2(\mathbf{x}\cdot \mathbf{u})(\mathbf{y}\cdot \mathbf{u})+2\|\mathbf{x}^\perp\|\|\mathbf{y}^\perp\|, \\
	&< \|\mathbf{x}\|^2 + \|\mathbf{y}\|^2 - \|\mathbf{x}\|\|\mathbf{y}\|,\\
	&\leq \|\mathbf{x}\|^2.
	\end{align*}
	
	To put this another way, the distance between any pair of points in $C$ is less than the distance from the more distant point to $\lambda_i$.
	
	Consequently, $\lambda_i$ has at most $M$ neighbors in $C$.
	Indeed, suppose for contradiction that there are $M+1$ neighbors in $C$, and let $\lambda$ be a neighbor at the largest distance from $\lambda_i$.
	Then, all of the remaining $M$ of the neighbors are closer to $\lambda$ than $\lambda_i$ is.
	This is a contradiction, since only the $M$ closest points to $\lambda$ are selected as neighbors.
	
	The preceding argument applies for an arbitrary unit vector $\mathbf{u}$.
	Suppose, for contradiction, that $\lambda_i$ has more than $cM$ neighbors for a sufficiently large constant $c$.
	Then, the expected number of neighbors in the cone corresponding to a uniformly random $\mathbf{u}$ will be greater than $M$, but we have shown that there cannot be any $\mathbf{u}$ for which this number is greater than $M$.
\end{proof}

Next we bound $|L_i|$ for an arbitrary $\lambda_i$.

\begin{lemma}\label{lem:conflictBound}
	For each $i$,
	$$|L_i| \ll M.$$
\end{lemma}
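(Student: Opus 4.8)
\textbf{Proof plan for Lemma~\ref{lem:conflictBound}.}

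The plan is to mimic the geometric averaging argument used in Lemma~\ref{lem:degreeBound}, but now applied to the radii $R_j$ and centers $\lambda_j$ of the balls $B_j$ rather than to the neighbor edges of $G$. Recall $L_i$ is the set of $\lambda_j$ with $R_j \geq R_i$ and $B_i \cap B_j \neq \emptyset$. Since $B_j$ is a ball of radius $R_j$ centered at $\lambda_j$, and its interior contains at most $M$ points of $S_\tau$, the balls $B_j$ for $\lambda_j \in L_i$ are ``large'' balls all meeting the fixed ball $B_i$; intuitively they cannot be too numerous because each of them, being at least as large as $B_i$ and intersecting it, must contain a substantial portion of a region near $\lambda_i$, yet each contains at most $M$ points of $S_\tau$ in its interior while $L_i$ itself is a subset of $S_\tau$.

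First I would place $\lambda_i$ at the origin and fix an arbitrary unit vector $\mathbf{u}$, using the same cone $C = \{\mathbf{x} : \mathbf{x}\cdot\mathbf{u} > (\sqrt{3}/2)\|\mathbf{x}\|\}$ of constant aperture as in Lemma~\ref{lem:degreeBound}. I would show that the number of $\lambda_j \in L_i$ lying in $C$ is $O(M)$, and then conclude by the same linearity-of-expectation / random-direction averaging argument that $|L_i| \ll M$ overall. To bound the count inside $C$: suppose $\lambda_j, \lambda_k \in L_i \cap C$ with $\|\lambda_j\| \geq \|\lambda_k\|$ (distances measured from $\lambda_i$ at the origin). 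By Lemma~\ref{lem:vectorDiff} applied with $\mathbf{x} = \lambda_j$, $\mathbf{y} = \lambda_k$, together with the cone estimate $\|\mathbf{x}^\perp\|^2 < (1/4)\|\mathbf{x}\|^2$ valid in $C$, we get $\|\lambda_j - \lambda_k\|^2 < \|\lambda_j\|^2$, i.e. $\lambda_k$ is strictly closer to $\lambda_j$ than $\lambda_i$ is. Now I use the hypothesis $B_i \cap B_j \neq \emptyset$ and $R_j \geq R_i$: since $B_i$ has radius $R_i \leq R_j$ and touches $B_j$, every point of $B_i$ — in particular $\lambda_i$ — lies within distance $R_j + 2R_i \leq 3R_j$ of $\lambda_j$; more carefully, $\|\lambda_i - \lambda_j\| \leq R_i + R_j \leq 2R_j$. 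Combined with $\|\lambda_k - \lambda_j\| < \|\lambda_i - \lambda_j\| \leq 2R_j$, this shows all the points $\lambda_k \in L_i \cap C$ lie in the ball of radius $2R_j$ about $\lambda_j$; after rescaling this region by a bounded factor it is covered by $O(1)$ balls of radius $R_j$, and the interior of each contains at most $M$ points of $S_\tau$. Hence $|L_i \cap C| \ll M$.

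The main obstacle I anticipate is making the ``covered by $O(1)$ balls of radius $R_j$'' step precise in a way that genuinely uses $R_j \geq R_i$ and the at-most-$M$-interior-points property simultaneously: one must be careful that the relevant comparison ball is centered appropriately (at $\lambda_j$, not $\lambda_i$) and has radius comparable to $R_j$ rather than $R_i$, so that ``interior of $B_j$ contains at most $M$ points'' can be leveraged — possibly after noting that $\lambda_j$ itself, and the $M$ points defining $R_j$, already account for the bulk of $S_\tau$ near $\lambda_j$. A cleaner route, matching the style of Lemma~\ref{lem:degreeBound}, is to argue directly that if $L_i \cap C$ had more than $cM$ elements then picking the farthest $\lambda_j$ from $\lambda_i$ among them forces all the others into the interior of a ball of radius $\leq R_j$ around $\lambda_j$ (using the cone inequality to control distances and $B_i \cap B_j \neq \emptyset$, $R_j \geq R_i$ to anchor the scale), contradicting the fact that the interior of $B_j$ contains at most $M$ points of $S_\tau$. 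Then the random-direction averaging finishes the proof exactly as before.
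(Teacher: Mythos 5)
Your overall skeleton (fix a cone at $\lambda_i$, bound $|L_i\cap C|$ by $O(M)$, then average over random directions $\mathbf{u}$) is exactly the paper's, but the core geometric step has a genuine quantitative gap. With the wide cone $C=\{\mathbf{x}:\mathbf{x}\cdot\mathbf{u}>(\sqrt3/2)\|\mathbf{x}\|\}$ you only get $\|\lambda_j-\lambda_k\|<\|\lambda_j-\lambda_i\|$, and the hypothesis $B_i\cap B_j\neq\emptyset$, $R_j\geq R_i$ only gives $\|\lambda_j-\lambda_i\|\leq R_i+R_j\leq 2R_j$. So the farthest point $\lambda_j$ traps the others in a ball of radius $2R_j$ about $\lambda_j$, \emph{not} in the interior of $B_j$ (radius $R_j$); when $\|\lambda_j-\lambda_i\|>R_j$ (e.g.\ $R_i\approx R_j$ and the balls barely touch), your ``cleaner route'' conclusion simply does not follow. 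The patch ``cover the $2R_j$-ball by $O(1)$ balls of radius $R_j$'' does not close the gap either, because the at-most-$M$-interior-points property is only available for the specific balls $B_\ell$ centered at points $\lambda_\ell\in S_\tau$ with their own radii $R_\ell$; an arbitrary covering ball of radius $R_j$ (centered off $S_\tau$, or at a $\lambda_k$ with $R_k\ll R_j$) can contain far more than $M$ points of $S_\tau$, and covering by balls of radius $R_i$ instead would cost $(R_j/R_i)^4$ balls, which is unbounded.

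The paper's proof is built precisely to avoid this factor-of-two loss: it uses a much narrower cone ($\mathbf{x}\cdot\mathbf{u}>0.99\|\mathbf{x}\|$, so the cross term in Lemma~\ref{lem:vectorDiff} has coefficient $1.92$ rather than $1$) and splits the cone into three radial pieces relative to $R_i$: points at distance $<R_i$ lie in the interior of $B_i$ (at most $M$ of them); points with $R_i\leq\|\mathbf{x}\|\leq 1.5R_i$ are pairwise at distance $<R_i\leq R_j$, so any one of their own balls traps them all; and for points beyond $1.5R_i$ a small quadratic optimization shows the pairwise distance is less than $\|\mathbf{x}\|-R_i$, which is a lower bound on the radius of the ball associated to the farther point, because that ball must meet $B_i$ (this uses $R_j\geq\operatorname{dist}(\lambda_j,B_i)$, a stronger anchor than $R_j\geq R_i$). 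Each piece then yields at most $M$ points of $L_i$, and the averaging finishes as you describe. To repair your argument you would need both the narrower cone and this annular decomposition (or some equivalent way to compare pairwise distances with $\|\lambda_j-\lambda_i\|-R_i$ rather than with $\|\lambda_j-\lambda_i\|$).
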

\begin{proof}
	Take $\lambda_i$ to be at the origin.
	Let $\mathbf{u}$ be an arbitrary unit vector, and let $C$ be the cone
	$$C = \{\mathbf{x} : \mathbf{x} \cdot \mathbf{u} > 0.99 \|\mathbf{x}\|\}.$$
	Note that, if $\mathbf{x} \in C$, then
	$$\|\mathbf{x}^\perp\|^2 = \|\mathbf{x}\|^2 - (\mathbf{x} \cdot \mathbf{u})^2 < 0.02 \|\mathbf{x}\|^2.$$
	If $\mathbf{x}, \mathbf{y}$ are vectors in $C$, then by Lemma \ref{lem:vectorDiff},
	\begin{align}
	\| \mathbf{x} - \mathbf{y} \|^2 &\leq \|\mathbf{x}\|^2 + \|\mathbf{y}\|^2 - 2(\mathbf{x}\cdot \mathbf{u})(\mathbf{y}\cdot \mathbf{u})+2\|\mathbf{x}^\perp\|\|\mathbf{y}^\perp\|, \nonumber \\
	&< \|\mathbf{x}\|^2 + \|\mathbf{y}\|^2 - 1.92\|\mathbf{x}\|\|\mathbf{y}\|. \label{eq:L_iDistance}
	\end{align}

	Define the following subsets of $C$:
	\begin{align*}
	C_1 &= \{\mathbf{x} \in C : \|\mathbf{x}\| < R_i\}, \\
	C_2 &= \{\mathbf{x} \in C : R_i \leq \|\mathbf{x}\| < 1.5 R_i \}, \\
	C_3 &= \{\mathbf{x} \in C : 1.5 R_i \leq \|\mathbf{x}\| \}.
	\end{align*}
	We will show that each of $C_1, C_2,$ and $C_3$ contains at most $M$ points of $L_i$.
	
	Since every point of $S_\tau$ that is at distance less than $R_i$ from $\lambda_i$ is a neighbor of $\lambda_i$, it is immediate that $C_1$ contains fewer than $M$ points of $S_\tau$.
	
	Let $\mathbf{x}, \mathbf{y}$ be vectors in $C_2$ with $R_i \leq \|\mathbf{y}\| \leq \|\mathbf{x}\| < 1.5 R_i$.
	By Equation~\eqref{eq:L_iDistance},
	\begin{align*}
	\| \mathbf{x} - \mathbf{y} \|^2 &< \|\mathbf{x}\|^2 + \|\mathbf{y}\|^2 - 1.92\|\mathbf{x}\|\|\mathbf{y}\|, \\
	&\leq \|\mathbf{x}\|^2 + \|\mathbf{x}\|\|\mathbf{y}\| - 1.92\|\mathbf{x}\|\|\mathbf{y}\|, \\
	&\leq \|\mathbf{x}\|^2 - 0.92 \|\mathbf{x}\| R_i, \\
	&< 0.87 R_i^2.
	\end{align*}
	
	In other words, each pair of vectors in $C_2$ is at distance less than $R_i$.
	By assumption, if $R$ is the radius of the ball associated to any $\lambda \in L_i$, we have $R \geq R_i$.
	If there are $M+1$ points of $L_i$ in $C_2$, then the ball associated to each of these points contains all $M+1$ of the points in its interior, which contradicts the construction of the balls.
	
	Let $\mathbf{x}, \mathbf{y}$ be vectors in $C_3$ with $1.5 R_i \leq \|\mathbf{y}\| \leq \|\mathbf{x}\|$.
	We claim that
	\begin{equation}\label{eq:C3claim}
	\|\mathbf{x} - \mathbf{y} \|^2 < (\|\mathbf{x}\| - R_i)^2.
	\end{equation}
	From (\ref{eq:L_iDistance}), in order to prove (\ref{eq:C3claim}), it is enough to show that
	\begin{align}
	\|\mathbf{y}\|^2 + 2\|\mathbf{x}\|R_i - 1.92 \|\mathbf{x}\| \|\mathbf{y}\| - R_i^2 < 0,& \label{eq:C3functToMaximize}\\
	\text{under the assumption that} \hspace{1cm} \|\mathbf{x}\| \geq \|\mathbf{y}\| \geq 1.5 R_i.& \label{eq:C3constraint}
	\end{align}
	
	When the left hand side of (\ref{eq:C3functToMaximize}) is constrained to the line $\|\mathbf{x}\|=\|\mathbf{y}\|$, we have
	$$-0.98\|\mathbf{x}\|^2 + 2\|\mathbf{x}\|R_i + R_i^2.$$
	The derivative of this is negative for $\|\mathbf{x}\| \geq 1.5 R_i$.
	Furthermore, when $\|\mathbf{y}\| \geq 1.5R_i$, the left hand side of (\ref{eq:C3functToMaximize}) is decreasing in $\|\mathbf{x}\|$.
	Hence, it suffices to consider the point $\|\mathbf{x}\| = \|\mathbf{y}\| = 1.5 R_i$.
	At this point, the left hand side of (\ref{eq:C3functToMaximize}) is $-.07 R_i < 0$, as claimed.

%	Considering the partial derivatives of the left hand side of (\ref{eq:C3functToMaximize}), we can see that this quadric attains its unique unconstrained maximum at $\|\mathbf{y}\| = (2/1.92)$, $\|\mathbf{x}\| = (2/1.92)^2$.
%	Since this point is outside of the region defined by (\ref{eq:C3constraint}), it is enough to consider the boundaries of this region.

%	When (\ref{eq:C3functToMaximize}) is constrained to the line $\|\mathbf{y}\|=1.5R$, we have
%	$$1.25R_i^2 - 0.88\|\mathbf{x}\|R_i.$$
%	Here, the derivative is always negative.

	The meaning of (\ref{eq:C3claim}) is that, given any two points in $C_3$, the distance from the point that is further from $\lambda_i$ to the nearest point of $B_i$ is greater than the distance between the two points.
	Consequently, if $C_3$ contains $M+1$ points of $L_i$, the ball around the furthest point (which intersects $B_i$) must contain all $M+1$ points strictly in its interior, which is a contradiction.
\end{proof}

Let $T \subset G^2$ be the set of pairs of edges in $G$ that conflict.
Applying Lemmas \ref{lem:degreeBound} and \ref{lem:conflictBound},
\begin{equation}\label{eq:conflictingPairsBound}
|T| \leq \sum_i \deg(\lambda_i) \sum_{\lambda_j \in L_i} \deg(\lambda_j) \ll |S_\tau| M^3.\end{equation}

We have
\[(A+A) \times (A+A) \supset \bigcup_{\{\lambda_i, \lambda_j\} \in G} (A_i + A_j)\times ( A_i \lambda_i +  A_j \lambda_j).\]
Using inclusion-exclusion and Lemma \ref{lem:intersectionSize}, this implies that
\[|(A+A) \times (A+A)| \geq |G|\tau^2 - c|T|\tau^2 K^{-1}\]
for some $c>0$.
Since the degree of each $\lambda_i \in G$ is at least $M$, we have $|G| \geq |S_\tau|M/2$.
Together with Equation \eqref{eq:conflictingPairsBound}, we get
\[|(A+A)\times (A+A)| \geq |S_\tau|M \tau^2/2 - c |S_\tau| M^3 \tau^2 K^{-1/2}.\]
Taking $M = c'K^{1/6}$ for a sufficiently small constant $c'$, we obtain
\[|A+A|^2 \gg |S_\tau| \tau^2 K^{1/6} \gtrsim E^*(A) K^{1/6} \geq |A|^4 |AA|^{-1} K^{1/6} .\]
Assuming that $K > |A|^{\delta}$ for some fixed $\delta > 0$, this shows that $|A+A|+|AA| \gtrsim |A|^{4/3 + \delta/18}$, as claimed.

\section*{Acknowledgments}
We would like to thank Oliver Roche-Newton for sharing his unpublished notes on Konyagin and Shkredov's sum-product bound.
We would like to thank Adam Sheffer for extensive discussions on an earlier version of this paper.

Work on this project by Ben Lund was supported by NSF grant DMS-1344994 (RTG in Algebra,
Algebraic Geometry, and Number Theory, at the University of Georgia).
\bibliographystyle{plain}
\bibliography{sum-product-ref}

\end{document}